\newtheorem{theorem}{Theorem}[section]
\newtheorem{lemma}[theorem]{Lemma}
\newtheorem{corollary}[theorem]{Corollary}
\theoremstyle{definition}
\newtheorem{definition}[theorem]{Definition}
\newtheorem{example}[theorem]{Example}
\theoremstyle{remark}
\numberwithin{equation}{section}
\begin{document}

\setcounter{page}{1}

\title[Fixed point theorems for generalized
$\theta-\phi-$contraction mappings]{Fixed point theorems for generalized
$\theta-\phi-$contraction mappings in rectangular
quasi b-metric spaces}
\author[M. Rossafi, A. Kari]{Mohamed Rossafi$^{1*}$ and Abdelkarim Kari$^{2}$}

\address{$^{1}$Higher School of Education and Training, University of Ibn Tofail, Kenitra, Morocco}
\email{\textcolor[rgb]{0.00,0.00,0.84}{rossafimohamed@gmail.com}}	
	
	\address{$^{2}$ Laboratory of Analysis, Modeling and Simulation Faculty of Sciences Ben M’Sik, Hassan II University, B.P. 7955 Casablanca, Morocco}
	\email{\textcolor[rgb]{0.00,0.00,0.84}{abdkrimkariprofes@gmail.com}}

\date{
\newline \indent $^{*}$Corresponding author}

\keywords{Fixed point, rectangular quasi b-metric spaces, $\theta-\phi-$contraction.} 
\subjclass[2020]{Primary 47H10; Secondary 54H25.}

\begin{abstract}
A generalized version of both rectangular metric spaces and rectangular quasi-metric spaces is known as rectangular quasi b-metric spaces (RQB-MS).
In the current work, we define generalized $( \theta,\phi) $-contraction mappings and study fixed point (FP) results for the maps introduced in the setting of rectangular quasi b-metric spaces. Our results generalize many existing results. We also provide examples in support of our main findings.
\end{abstract}

\maketitle
\section{Introduction }
The Banach  contraction principle is a basic result in fixed point theory (FPT) \cite{BA}. Due to its importance, various mathematicians studied many interesting extensions and generalizations, (see \cite{BRO,KAN,RE}).\\
Numerous generalizations of the concept of metric spaces (MS) are defined and some FPTs have been proved in these spaces. For instance, asymmetric MS were introduced by Wilson \cite{W} as a generalization of MS. Many mathematicians worked on this interesting space ( see also \cite{AZ}). Branciari (\cite{BRA}) seems to be the first to generalize MS in 2000.  In the generalization, the triangle inequality is replaced by the quadrilateral inequality  $ d(x,y)\leq d(x,z)+d(z,u)+d(u,y) $ for all pairwise distinct points $ x,y,z $ and $ u $. Any MS is a generalized MS but in general, generalized MS might not be a MS. Various FP results were established on such spaces, (see \cite{JS,KARIKO,KIR} for more details).
\par
The notion of b-rectangular MS have been introduced by the authors in \cite{RG}, and many authors investigated many existing FPTs in such spaces, (see e.g. \cite{KARO,RO}). Bontu Nasir et al in \cite{PIR} introduced the notions of quasi b-generalized MS. Any generalized MS but in general, RQB-MS might not be a generalized MS. The concept of $\theta-\phi-$contraction has been introduced by Zheng et al. in \cite{ZH}. They also established some FP results for such mappings in complete MS and generalized the results of Kannan and Brower.
\par
In the current paper, we introduce a new notion of generalized $ \theta-\phi-$contraction and establish some results of FP for such mappings in complete rectangular quasi b-metric. The results presented in the paper extend the corresponding results of Zheng et al. \cite{ZH} and Banach \cite{KAN} on RQB-MS. Also, we derive some useful corollaries of these results.

\section{preliminaries}

In this section, we give basic notions concerning a $\theta-\phi-$contraction in the setting of b-MS.
\begin{definition}
\cite{PIR} Suppose a non-empty set $\mathcal{X}$ and $ \rho:\mathcal{X}\times \mathcal{X}\rightarrow \mathbb{R}^{+}$ be a mapping such that $\forall x_1,x_2$ $\in \mathcal{X}$ and $\forall$ distinct points $u,v\in \mathcal{X}$, each of them different from $x_1$ and $x_2,$ on has
\item[(i)] $\rho(x_1,x_2)=0$ if and only if $x_1=x_2;$
\item[(ii)] $\rho(x_1,x_2)\leq s\left[ \rho(x_1,u)+\rho(u,v)+\rho(v,x_2)\right] .\left( \text{b-rectangular inequality}\right)$\\
Then $\left(\mathcal{X},\rho\right) $ is called an QRB-MS.
\end{definition}

\begin{definition}
\cite{PIR}. Suppose a QRB-MS $\left(\mathcal{X},\rho\right) $ and $\left\lbrace x_{n}\right\rbrace _{n\in\mathbb{N}}$ be a sequence in $\mathcal{X}$, and $x\in \mathcal{X}$. Then\\
\item[(i)] The sequence $\left\lbrace x_{n}\right\rbrace _{n\in\mathbb{N}}$ forward (backward) converges to $x$ if and only if
$$\lim\limits_{n\rightarrow +\infty}\rho \left( x,x_{n}\right)=\lim\limits_{n\rightarrow +\infty}\rho \left( x_{n},x\right) =0. $$
\item[(ii)] The sequence $\left\lbrace x_{n}\right\rbrace _{n\in\mathbb{N}}$ forward (backward) Cauchy  if
$$\lim\limits_{n,m\rightarrow +\infty}\rho\left( x_{n}, x_{m}\right) =\lim\limits_{n,m\rightarrow +\infty}\rho \left( x_{m},x_{n}\right)=0. $$
\end{definition}

 \begin{example}
Define $ \mathcal{X}:=A\cup B $, where $ A=\lbrace \frac{1}{n}:n\in \mathbb{N}, 2\leq n\leq 7 \rbrace $ and $ B=\left[1,2 \right]  $. Define $ \rho:\mathcal{X}\times \mathcal{X}\rightarrow \left[0,+\infty \right[  $ as follows:
	\begin{equation*}
	\left\lbrace
	\begin{aligned}
	\rho(a, b) &=\rho(b, a)\ for \ all \  a,b\in \mathcal{X}.\\
	\end{aligned}
	\right.
	\end{equation*}
	and
	\begin{equation*}
	\left\lbrace
	\begin{aligned}		
	\rho\left( \frac{1}{2},\frac{1}{3}\right) =\rho\left( \frac{1}{4},\frac{1}{5}\right) =\rho\left( \frac{1}{6},\frac{1}{7}\right) 	&=0,05\\
	\rho\left( \frac{1}{3},\frac{1}{2}\right) =\rho\left( \frac{1}{5},\frac{1}{4}\right) =\rho\left( \frac{1}{7},\frac{1}{6}\right) 	&=0,04\\
	\rho\left( \frac{1}{2},\frac{1}{4}\right) =\rho\left( \frac{1}{3},\frac{1}{7}\right) =\rho\left( \frac{1}{5},\frac{1}{6}\right) 	&=0,08\\
	\rho\left( \frac{1}{4},\frac{1}{2}\right) =\rho\left( \frac{1}{7},\frac{1}{3}\right) =\rho\left( \frac{1}{6},\frac{1}{5}\right) 	&=0,05\\
	\rho\left( \frac{1}{2},\frac{1}{6}\right) =\rho\left( \frac{1}{3},\frac{1}{4}\right) =\rho\left( \frac{1}{5},\frac{1}{7}\right) 	&=0,4\\
	\rho\left( \frac{1}{2},\frac{1}{5}\right) =\rho\left( \frac{1}{3},\frac{1}{6}\right) =\rho\left( \frac{1}{4},\frac{1}{7}\right) 	&=0,24\\
	\rho\left( \frac{1}{2},\frac{1}{7}\right) =\rho\left( \frac{1}{3},\frac{1}{5}\right) =\rho\left( \frac{1}{4},\frac{1}{6}\right) 	&=0,15\\
	\rho\left(a,b\right) =\left( \vert a-b\vert\right) ^{2} \ otherwise.
	\end{aligned}
	\right.
	\end{equation*}
	Then $ (\mathcal{X},\rho) $ is a QRB-MS with coefficient $s=3$.
\end{example}

The following notion was introduced in \cite{JS}.
\begin{definition}
 \cite{JS}. Suppose $ \Theta $ be the family of all increasing and continuous functions
 $\theta : \left]0,+\infty \right[ \rightarrow \left] 1 ,+\infty \right[$: For each sequence $(x_{n})\subset \left] 0,+\infty \right[$;
\begin{equation*}
\lim_{n\rightarrow 0}x_{n}=0\ \,\,\text{ if and only if}\,\,\,\lim_{n\rightarrow \infty }\theta\left( x_{n}\right) =1;\\
\end{equation*}
\end{definition}

In \cite{ZH}. Zheng et al. presented the concept of $ \theta-\phi-$contraction on MS.
\begin{definition}
\cite{ZH} Let $ \Phi $ be the family of all nondecreasing and continuous functions $\phi$:  $\left[ 1,+\infty \right[ $ $\rightarrow \left[ 1,+\infty \right[ $: For each  $t\in \left] 1,+\infty \right[ $, $  lim_{n\rightarrow \infty }\phi^{n}( t) =1$.
\end{definition}

It should be remarked also that the authors in \cite{ZH} proved the following nice results.
\begin{lemma}\label{2.7}
\cite{ZH} If $\phi $ $\in \Phi$. Then $\phi(t)< t $ for all $t\in \left]1, \infty\right[$ and $ \phi(1)$=1.
\end{lemma}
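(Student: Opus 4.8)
The plan is to derive both conclusions purely from the monotonicity of $\phi$ together with the convergence hypothesis defining $\Phi$, namely $\phi^{n}(t)\to 1$ for every $t>1$; continuity will play no role in this particular lemma. Throughout I write $\phi^{n}$ for the $n$-fold composite of $\phi$ with itself, and I use repeatedly the elementary fact that applying a nondecreasing map to both sides of an inequality preserves it.

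First I would establish $\phi(1)=1$. Since $\phi$ takes values in $[1,+\infty[$ we already have $\phi(1)\geq 1$, so it suffices to rule out $\phi(1)=c>1$. Assume this holds. Because $\phi$ is nondecreasing, $\phi(t)\geq \phi(1)=c$ for every $t\geq 1$; in particular $\phi(t)\geq c\geq 1$, so $\phi$ may be applied again and a straightforward induction yields $\phi^{n}(t)\geq c$ for all $n\in\mathbb{N}$ and all $t\geq 1$. Choosing any $t\in\,]1,+\infty[$ (for instance $t=c$), this gives $\lim_{n\to\infty}\phi^{n}(t)\geq c>1$, contradicting $\lim_{n\to\infty}\phi^{n}(t)=1$. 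Hence $\phi(1)=1$.

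Next I would prove $\phi(t)<t$ for every $t\in\,]1,+\infty[$. Suppose, for contradiction, that $\phi(t_{0})\geq t_{0}$ for some $t_{0}>1$; then $\phi(t_{0})\geq t_{0}>1$. Applying the nondecreasing map $\phi$ to $\phi(t_{0})\geq t_{0}$ gives $\phi^{2}(t_{0})=\phi(\phi(t_{0}))\geq \phi(t_{0})\geq t_{0}$, and by induction $\phi^{n}(t_{0})\geq t_{0}$ for all $n\in\mathbb{N}$. Consequently $\lim_{n\to\infty}\phi^{n}(t_{0})\geq t_{0}>1$, contradicting the defining property of $\Phi$. Therefore $\phi(t)<t$ on $]1,+\infty[$, which completes the proof. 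As for difficulties, there is essentially no obstacle here: the argument is a routine monotone-iteration contradiction, the only mild care needed being to check that the inductions are legitimately set up (each bound being $\geq 1$ so that $\phi$ can be reapplied).
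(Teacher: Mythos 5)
Your proof is correct and follows essentially the same route as the source: the paper states this lemma without proof, citing Zheng et al., and the argument there is the same monotone-iteration contradiction you give (if $\phi(t_{0})\geq t_{0}>1$ then $\phi^{n}(t_{0})\geq t_{0}$ for all $n$, contradicting $\phi^{n}(t_{0})\to 1$, and similarly for $\phi(1)=1$). Your remark that continuity is not needed here, only monotonicity together with the limit condition, is accurate.
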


\begin{definition}
\cite{ZH}. For a MS $(\mathcal{X},\rho)$ and a mapping $\mathcal{T}:\mathcal{X}\rightarrow \mathcal{X}$.\\
 $\mathcal{T}$ is called a $\theta-\phi-$contraction if there exist $\theta \in \Theta $ and $\phi \in \Phi $: for any $a,b\in \mathcal{X},$
\begin{equation*}
\rho\left( \mathcal{T}a,\mathcal{T}b\right) >0\Rightarrow \theta \left[ \rho\left( \mathcal{T}a,\mathcal{T}b\right) \right]\leq \phi \left( \theta \left[ N\left( a,b\right) \right] \right),
\end{equation*}
where
\begin{equation*}
N\left(a,b\right) =\max \left\{ \rho\left( a,b\right) ,\rho\left( a,\mathcal{T}a\right),\rho\left( b,\mathcal{T}b\right) \right\}.
\end{equation*}
\end{definition}

\begin{theorem}
\cite{ZH}. For a complete MS $\left( \mathcal{X},d\right)$ and a $ \theta-\phi-$contraction $\mathcal{T}:\mathcal{X}\rightarrow \mathcal{X}$. Then $\mathcal{X}$ has a unique FP.
\end{theorem}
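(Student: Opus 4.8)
The plan is to show that the Picard sequence converges to the unique fixed point, following the scheme of Zheng et al. Fix $x_{0}\in\mathcal{X}$ and put $x_{n+1}=\mathcal{T}x_{n}$. If $x_{n_{0}}=x_{n_{0}+1}$ for some $n_{0}$, then $x_{n_{0}}$ is a fixed point and we are done; so assume $\rho(x_{n},x_{n+1})>0$ for all $n$. Applying the contractive inequality to $a=x_{n-1}$, $b=x_{n}$ gives $\theta(\rho(x_{n},x_{n+1}))\leq\phi(\theta(N(x_{n-1},x_{n})))$, where $N(x_{n-1},x_{n})=\max\{\rho(x_{n-1},x_{n}),\rho(x_{n},x_{n+1})\}$. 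Since $\theta$ is increasing and $\phi(t)<t$ for $t>1$ by Lemma \ref{2.7}, the case $N(x_{n-1},x_{n})=\rho(x_{n},x_{n+1})$ would give $\theta(\rho(x_{n},x_{n+1}))<\theta(\rho(x_{n},x_{n+1}))$, which is impossible; hence $N(x_{n-1},x_{n})=\rho(x_{n-1},x_{n})$ and $\{\rho(x_{n},x_{n+1})\}$ is strictly decreasing. Iterating yields $1<\theta(\rho(x_{n},x_{n+1}))\leq\phi^{n}(\theta(\rho(x_{0},x_{1})))$; letting $n\to\infty$ and using $\phi^{n}(t)\to 1$ together with the defining property of $\Theta$, we obtain $\rho(x_{n},x_{n+1})\to 0$.

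The crux is to prove $\{x_{n}\}$ is Cauchy. I would argue by contradiction: if not, there are $\varepsilon>0$ and subsequences with indices $m_{k}>n_{k}>k$ such that $\rho(x_{m_{k}},x_{n_{k}})\geq\varepsilon$ while $m_{k}$ is chosen minimal with this property, so $\rho(x_{m_{k}-1},x_{n_{k}})<\varepsilon$. Using the triangle inequality and $\rho(x_{n},x_{n+1})\to 0$, one shows $\rho(x_{m_{k}},x_{n_{k}})\to\varepsilon$, and likewise $\rho(x_{m_{k}+1},x_{n_{k}+1})\to\varepsilon$ and $N(x_{m_{k}},x_{n_{k}})=\max\{\rho(x_{m_{k}},x_{n_{k}}),\rho(x_{m_{k}},x_{m_{k}+1}),\rho(x_{n_{k}},x_{n_{k}+1})\}\to\varepsilon$. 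For $k$ large, $\rho(\mathcal{T}x_{m_{k}},\mathcal{T}x_{n_{k}})>0$, so the contractive inequality gives $\theta(\rho(x_{m_{k}+1},x_{n_{k}+1}))\leq\phi(\theta(N(x_{m_{k}},x_{n_{k}})))$. Passing to the limit using continuity of $\theta$ and $\phi$ and Lemma \ref{2.7} yields $\theta(\varepsilon)\leq\phi(\theta(\varepsilon))<\theta(\varepsilon)$, a contradiction; hence $\{x_{n}\}$ is Cauchy.

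By completeness $x_{n}\to x^{*}$ for some $x^{*}\in\mathcal{X}$. To see $\mathcal{T}x^{*}=x^{*}$, suppose $\rho(x^{*},\mathcal{T}x^{*})>0$. Then for large $n$ we have $\rho(\mathcal{T}x_{n},\mathcal{T}x^{*})>0$ (otherwise $x_{n+1}=\mathcal{T}x^{*}$ for infinitely many $n$, and letting $n\to\infty$ forces $x^{*}=\mathcal{T}x^{*}$). Applying the contractive inequality to $a=x_{n}$, $b=x^{*}$ yields $\theta(\rho(x_{n+1},\mathcal{T}x^{*}))\leq\phi(\theta(N(x_{n},x^{*})))$ with $N(x_{n},x^{*})=\max\{\rho(x_{n},x^{*}),\rho(x_{n},x_{n+1}),\rho(x^{*},\mathcal{T}x^{*})\}\to\rho(x^{*},\mathcal{T}x^{*})$; letting $n\to\infty$ and using continuity of $\theta$ and $\phi$ gives $\theta(\rho(x^{*},\mathcal{T}x^{*}))\leq\phi(\theta(\rho(x^{*},\mathcal{T}x^{*})))<\theta(\rho(x^{*},\mathcal{T}x^{*}))$, a contradiction, so $x^{*}$ is a fixed point.

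Finally, for uniqueness, if $x^{*}$ and $y^{*}$ are fixed points with $\rho(x^{*},y^{*})>0$, then $N(x^{*},y^{*})=\rho(x^{*},y^{*})$ and $\theta(\rho(x^{*},y^{*}))=\theta(\rho(\mathcal{T}x^{*},\mathcal{T}y^{*}))\leq\phi(\theta(\rho(x^{*},y^{*})))<\theta(\rho(x^{*},y^{*}))$, which is impossible; hence the fixed point is unique. The main obstacle is the Cauchy step, specifically the bookkeeping needed to force $\rho(x_{m_{k}},x_{n_{k}})$, $\rho(x_{m_{k}+1},x_{n_{k}+1})$ and $N(x_{m_{k}},x_{n_{k}})$ to the common limit $\varepsilon$ using only $\rho(x_{n},x_{n+1})\to 0$ and the triangle inequality; everything else is an application of monotonicity and continuity of $\theta$, $\phi$ together with Lemma \ref{2.7}.
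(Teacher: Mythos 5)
Your proof is correct and follows the standard Zheng et al.\ argument; note that the paper does not actually prove this statement (it is quoted from \cite{ZH} as a preliminary result), so there is no in-paper proof to compare against beyond the analogous scheme used later. Your outline --- forcing $N(x_{n-1},x_n)=\rho(x_{n-1},x_n)$ to get monotone decay of $\rho(x_n,x_{n+1})$, the minimal-index $\varepsilon$-subsequence Cauchy argument, and the fixed-point and uniqueness steps via continuity of $\theta$, $\phi$ and Lemma \ref{2.7} --- is exactly the template the authors adapt (with extra $\rho(x_n,x_{n+2})$ estimates, forward/backward Cauchyness and Lemmas \ref{2.3}--\ref{4.4} replacing continuity of the metric) in their own Theorem \ref{Theorem3.5}.
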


\section{Main result}

\begin{lemma}\label{2.3}
Suppose a QRB-MS $\left(\mathcal{X}, \eta\right)$. Let sequences $\lbrace  x_{n}\rbrace $ and $\lbrace y_{n}\rbrace $ in $\mathcal{X}$. Then
\begin{itemize}
 \item[(a)] If $x_{n} $ forward convergent to $  x$ and $y_{n}$ backward convergent to $ y $ as $n\rightarrow \infty ,$ with $x\neq y,$ $x_{n}\neq x$ and $y_{n}\neq y$ for all $n\in \mathbb{N}.$ Then we have
\begin{equation*}
\frac{1}{s}\eta\left( x,y\right) \leq \lim_{n\rightarrow \infty }\inf \eta\left(x_{n},y_{n}\right) \leq \lim_{n\rightarrow \infty }\sup \eta\left(x_{n},y_{n}\right).
\end{equation*}
\item[(b)] If $x_{n} $ backward convergent to $  x$ and $y_{n}$ forward convergent to $ y $ as $n\rightarrow \infty ,$ with $x\neq y,$ $x_{n}\neq x$ and $y_{n}\neq y$ for all $n\in \mathbb{N}.$ Then we have
\begin{equation*}
\frac{1}{s}\eta\left( y,x\right) \leq \lim_{n\rightarrow \infty }\inf \eta\left(y_{n},x_{n}\right) \leq \lim_{n\rightarrow \infty }\sup \eta\left(y_{n},x_{n}\right).
\end{equation*}
\item[(c)]  If $y\in \mathcal{X}$ and $\lbrace x_{n}\rbrace $ is a Cauchy sequence (CS) in $\mathcal{X}$ with $x_{n}\neq x_{m}$ for any $m,n\in \mathbb{N},$ $m\neq n,$ converging to $x\neq y,$ then
\begin{equation*}
\frac{1}{s}\eta\left( x,y\right) \leq \lim_{n\rightarrow \infty }\inf \eta\left(x_{n},y\right) \leq \lim_{n\rightarrow \infty }\sup \eta\left( x_{n},y\right).
\end{equation*}
for all $x\in \mathcal{X}.$
\item[(d)]  If $y\in \mathcal{X}$ and $\lbrace x_{n}\rbrace $ is a CS in $\mathcal{X}$ with $x_{n}\neq x_{m}$ for any $m,n\in \mathbb{N},$ $m\neq n,$ converging to $x\neq y,$ then
\begin{equation*}
\frac{1}{s}\eta\left( x,y\right) \leq \lim_{n\rightarrow \infty }\inf \eta\left(y,x_{n}\right) \leq \lim_{n\rightarrow \infty }\sup \eta\left( y,x_{n}\right).
\end{equation*}
for all $x\in \mathcal{X}.$
\end{itemize}
\end{lemma}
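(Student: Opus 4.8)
The plan is to derive all four inequalities from one well-chosen instance of the b-rectangular inequality: pick the instance whose two \emph{outer} summands coincide with the distance to be bounded from below, arrange that its two remaining (\emph{connecting}) summands tend to $0$ along the given sequences, pass to $\liminf$, and divide by $s$. The trivial bound $\liminf\le\limsup$ then settles the right-hand half of each statement.

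For (a): forward convergence of $x_n$ to $x$ gives $\eta(x,x_n)\to 0$ and backward convergence of $y_n$ to $y$ gives $\eta(y_n,y)\to 0$. For all large $n$ the four points $x,x_n,y_n,y$ are pairwise distinct --- $x\ne y$, $x_n\ne x$, $y_n\ne y$ are hypotheses, while $x_n\ne y$, $y_n\ne x$ and $x_n\ne y_n$ hold eventually, since a subsequence violating any of them would, via the two convergences, force $x=y$ --- so the b-rectangular inequality is legitimate and reads
\begin{equation*}
\eta(x,y)\le s\big[\eta(x,x_n)+\eta(x_n,y_n)+\eta(y_n,y)\big].
\end{equation*}
Letting $n\to\infty$ and using $\eta(x,x_n)\to 0$, $\eta(y_n,y)\to 0$ gives $\eta(x,y)\le s\,\liminf_n\eta(x_n,y_n)$, which is (a). Part (b) is the mirror image, obtained from $\eta(y,x)\le s[\eta(y,y_n)+\eta(y_n,x_n)+\eta(x_n,x)]$ after interchanging the roles of forward and backward convergence.

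For (c) and (d) there is only the single Cauchy sequence $\{x_n\}$, so the two connecting points are taken to be two of its own terms, say $x_n$ and $x_{n+1}$. These are distinct because all terms of $\{x_n\}$ are distinct, and for large $n$ they differ from $x$ and from $y$ as well, because $x\ne y$ and $x_n$ converges to $x$. The b-rectangular inequality then gives, for (c),
\begin{equation*}
\eta(x,y)\le s\big[\eta(x,x_{n+1})+\eta(x_{n+1},x_n)+\eta(x_n,y)\big],
\end{equation*}
where $\eta(x,x_{n+1})\to 0$ by the convergence $x_n\to x$ and $\eta(x_{n+1},x_n)\to 0$ because $\{x_n\}$ is Cauchy (the index pair $n,n+1$ eventually lies beyond any Cauchy threshold). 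Taking $\liminf_n$ gives $\tfrac1s\eta(x,y)\le\liminf_n\eta(x_n,y)$, and (d) follows in the same way, the two connecting terms now being inserted on the other side so that the quantity $\eta(y,x_n)$ appears among the summands.

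The limit manipulations are routine; the step that genuinely requires attention is checking, in each of the four applications, that the four points fed to the b-rectangular inequality are pairwise distinct --- this is precisely the hypothesis under which that inequality was stated --- together with placing each sequence in the slot ($\eta(\cdot\,,x_n)$ versus $\eta(x_n,\cdot\,)$) that matches its direction of convergence. Once distinctness is established for all large $n$, that is, on a cofinite set of indices, it has no effect on the lower or upper limits, and the proof reduces to the displays above.
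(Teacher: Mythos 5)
Your proof is correct and follows essentially the same route as the paper's: a single application of the b-rectangular inequality, with the convergent (resp.\ Cauchy) sequence supplying the two intermediate points, followed by passing to the lower and upper limits and dividing by $s$. If anything you are more careful than the paper, which never checks the pairwise distinctness required to invoke the b-rectangular inequality; note only that in (d) this method naturally yields the lower bound $\tfrac{1}{s}\eta\left( y,x\right)$ rather than the $\tfrac{1}{s}\eta\left( x,y\right)$ printed in the statement, an asymmetry of the quasi-metric that the paper's own proof glosses over as well.
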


\begin{proof}
Using the b-rectangular inequality, it is easy to see that
$$  \eta(x, y) \leq  s\left[ \eta(x, x_{n}) + s\eta(x_{n}; y_{n}) + s\eta(y_{n}; y)\right] $$
$$  \eta(y, x) \leq  s\left[ \eta(y, y_{n}) + s\eta(y_{n}; x_{n}) + s\eta(x_{n}; x)\right],$$
taking the lower limit as $n\rightarrow\infty  $ and the upper limit as $n\rightarrow\infty $, we obtain
 \begin{itemize}
\item[(a)]
\begin{equation*}
\frac{1}{s}\eta\left( x,y\right) \leq \lim_{n\rightarrow \infty }\inf \eta\left(x_{n},y_{n}\right) \leq \lim_{n\rightarrow \infty }\sup \eta\left(x_{n},y_{n}\right),
\end{equation*}
and
\item[(b)]
\begin{equation*}
\frac{1}{s}\eta\left( y,x\right) \leq \lim_{n\rightarrow \infty }\inf \eta\left(y_{n},x_{n}\right) \leq \lim_{n\rightarrow \infty }\sup \eta\left(y_{n},x_{n}\right).
\end{equation*}
If $  y\in \mathcal{X}$, then, for infinitely many $ m, n \in\mathbb{N}$,
$$ \eta(x, y) \leq s\eta(x, x_{n}) + s\eta(x_{n},x_{m}) + s\eta(x_{m}, y), $$
and
$$ \eta(y, x) \leq  s\eta(x_{n},x ) + s\eta(x_{m},x_{n}) + s\eta(x_{n}, y). $$
Taking the lower limit as $n\rightarrow\infty  $ and the upper limit as $n\rightarrow\infty$, we obtain
\item[(c)]
\begin{equation*}
\frac{1}{s}\eta\left( x,y\right) \leq \lim_{n\rightarrow \infty }\inf \eta\left(x_{n},y\right) \leq \lim_{n\rightarrow \infty }\sup \eta\left( x_{n},y\right),
\end{equation*}
and
\item[(d)]
\begin{equation*}
\frac{1}{s}\eta\left( x,y\right) \leq \lim_{n\rightarrow \infty }\inf \eta\left(y,x_{n}\right) \leq \lim_{n\rightarrow \infty }\sup \eta\left( y,x_{n}\right).
\end{equation*}
\end{itemize}
\end{proof}

\begin{lemma}\label{lemmkari}
Suppose an QRB-MS $\left( \mathcal{X},\eta\right)$  and $\left\lbrace x_{n}\right\rbrace _{n}$ be a forward (or backward) CS with pairwise disjoint elements in $\mathcal{X}$. If $\left\lbrace x_{n}\right\rbrace _{n}$ forward converges to $ x \in \mathcal{X} $ and backward converges to $ y \in \mathcal{X} $, then $x=y$.
\end{lemma}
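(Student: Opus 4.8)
The plan is to argue by contradiction. Suppose $x\neq y$; I will show this forces $\eta(x,y)=0$, contradicting the defining property $\eta(x,y)=0\iff x=y$ of a QRB-MS. The whole argument rests on a single application of the b-rectangular inequality, with the two ``middle'' points taken from the tail of $\{x_n\}$.

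First I would translate the hypotheses into statements about $\eta$: forward convergence of $\{x_n\}$ to $x$ means $\eta(x,x_n)\rightarrow 0$, backward convergence to $y$ means $\eta(x_n,y)\rightarrow 0$, and the Cauchy assumption (forward or backward) yields $\eta(x_n,x_m)\rightarrow 0$ as $n,m\rightarrow\infty$; for a backward Cauchy sequence this is immediate once one interchanges the roles of the two running indices. It is worth stressing that the Cauchy hypothesis is genuinely needed here: in a rectangular quasi b-metric space a convergent sequence need not be Cauchy, which is exactly why limits can fail to be unique without it.

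The one point requiring a little care is the index bookkeeping needed to legitimately invoke the b-rectangular inequality, which demands that the two intermediate points be distinct and each different from the two endpoints. Since the terms $x_n$ are pairwise distinct, at most one index has $x_n=x$ and at most one has $x_n=y$; discarding these two exceptional indices and using $x\neq y$, for every sufficiently large $N$ I can choose $n,m\geq N$ with $n\neq m$ and $x_n,x_m\notin\{x,y\}$. For such $n,m$ the b-rectangular inequality applies with endpoints $x,y$ and middle points $x_n,x_m$, giving
\[
\eta(x,y)\leq s\big[\eta(x,x_n)+\eta(x_n,x_m)+\eta(x_m,y)\big].
\]

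Letting $n,m\rightarrow\infty$ along these admissible indices, the three terms on the right tend to $0$ by the limits recorded above, so $\eta(x,y)\leq 0$, hence $\eta(x,y)=0$, and therefore $x=y$ — contradicting $x\neq y$. The case in which $\{x_n\}$ is assumed backward (rather than forward) Cauchy is handled identically, since $\eta(x_n,x_m)\rightarrow 0$ still holds by symmetry in the two indices; one could equally run the mirror estimate $\eta(y,x)\leq s[\eta(y,x_n)+\eta(x_n,x_m)+\eta(x_m,x)]$. Thus the only mildly delicate step is the purely combinatorial selection of indices putting the four points in the position required by the b-rectangular inequality; everything else is a routine passage to the limit.
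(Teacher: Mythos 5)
Your proof is correct and follows essentially the same route as the paper: a single application of the b-rectangular inequality $\eta(x,y)\le s\left[\eta(x,x_n)+\eta(x_n,x_m)+\eta(x_m,y)\right]$ with two tail terms of the sequence, combined with forward convergence, the Cauchy property, and backward convergence to force $\eta(x,y)=0$. Your explicit index bookkeeping (discarding the at most two indices with $x_n\in\{x,y\}$ so that the quadrilateral inequality is legitimately applicable) is a point the paper's proof passes over silently, but otherwise the arguments coincide, including the symmetric treatment of the backward Cauchy case.
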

\begin{proof}
Fix $ \varepsilon > 0 $. First assume that $\left\lbrace x_{n}\right\rbrace _{n}$ is a forward CS, so there
exists $   n_{1}\in\mathbb{N} $: $  \eta(x_{n}, x_{m}) < \frac{\varepsilon}{3s} $ for all $   m \geq n \geq  n_{1}$. Since $ x_{n} $ forward converges to $ x  $ so there exists $ n_{2}\in\mathbb{N} $: $ \eta(x,x_{n}\leq \frac{\varepsilon}{3s}$. Also $ x_{n} $ backward converges to $ y $ there exists $ n_{3}\in\mathbb{N} $: $  \eta(x_{n}, x_{m}) < \frac{\varepsilon}{3s} $ for all $ m \geq n \geq  n_{3}$. Then for all $ l\geq \max\lbrace  n_{1}, n_{2}, n_{3}\rbrace $,
$  \eta(x, y) \leq s\left[ \eta(x, x_{n})+\eta(x_{n},x_{m})+\eta(x_{m},y)\right] <s\frac{\varepsilon}{3s}+s\frac{\varepsilon}{3s}+s\frac{\varepsilon}{3s}=\varepsilon $. As $  \varepsilon > 0 $ was arbitrary, we deduce that $  \eta(x, y)=0$, which implies $ x=y$. When $\left\lbrace x_{n}\right\rbrace _{n}$ is a backward CS, the proof is in similar fashion.
\end{proof}

\begin{lemma}\label{4.4}
Let $\left(\mathcal{X}, \eta\right) $ be a RQB-MS and let $\lbrace x_n \rbrace$ be a sequence in $ \mathcal{X} $:
 \begin{equation}
 \lim_{n\rightarrow \infty } \eta\left(x_{n},x_{n+1}\right)= \lim_{n\rightarrow \infty }\ \eta\left(x_{n},x_{n+2}\right)=0,
 \end{equation}
 and
 \begin{equation}
 \lim_{n\rightarrow \infty } \eta\left(x_{n+1},x_{n}\right)= \lim_{n\rightarrow \infty }\ \eta\left(x_{n+2},x_{n}\right)=0.
 \end{equation}
 If $ \lbrace x_n \rbrace $ is not a CS, then there exist $ \varepsilon >0 $ and two sequences $ \lbrace m_{\left( k\right)} \rbrace $ and $ \lbrace n_{\left( k\right)} \rbrace $ of positive integers:
 $$\varepsilon \leq \lim_{k\rightarrow \infty }\inf \eta\left( x_{m_{\left( k\right) }},x_{n_{\left( k\right)}}\right)  \leq \lim_{k\rightarrow \infty }\sup \eta\left( x_{m_{\left( k\right) }},x_{n_{\left( k\right)}}\right)\leq s\varepsilon ,$$
  $$\varepsilon \leq \lim_{k\rightarrow \infty }\inf \eta\left( x_{n_{\left( k\right) }},x_{m_{\left( k\right)+1}}\right)  \leq \lim_{k\rightarrow \infty }\sup \eta\left( x_{n_{\left( k\right) }},x_{m_{\left( k\right)+1}}\right)\leq s\varepsilon ,$$
  $$\varepsilon \leq \lim_{k\rightarrow \infty }\inf \eta\left( x_{m_{\left( k\right) }},x_{n_{\left( k\right)+1}}\right)  \leq \lim_{k\rightarrow \infty }\sup \eta\left( x_{m_{\left( k\right) }},x_{n_{\left( k\right)+1}}\right)\leq s\varepsilon ,$$
  $$\frac{\varepsilon}{s} \leq \lim_{k\rightarrow \infty }\inf \eta\left( x_{m_{\left( k\right)+1 }},x_{n_{\left( k\right)+1}}\right)  \leq \lim_{k\rightarrow \infty }\sup \eta\left( x_{m_{\left( k\right)+1 }},x_{n_{\left( k\right)+1}}\right)\leq s^{2}\varepsilon $$
  $$\varepsilon \leq \lim_{k\rightarrow \infty }\inf \eta\left( x_{n_{\left( k\right) }},x_{m_{\left( k\right)}}\right)  \leq \lim_{k\rightarrow \infty }\sup \eta\left( x_{n_{\left( k\right) }},x_{m_{\left( k\right)}}\right)\leq s\varepsilon ,$$
  $$\varepsilon \leq \lim_{k\rightarrow \infty }\inf \eta\left( x_{m_{\left( k\right) }},x_{n_{\left( k\right)+1}}\right)  \leq \lim_{k\rightarrow \infty }\sup \eta\left( x_{m_{\left( k\right) }},x_{n_{\left( k\right)+1}}\right)\leq s\varepsilon ,$$
  $$\varepsilon \leq \lim_{k\rightarrow \infty }\inf \eta\left( x_{n_{\left( k\right) }},x_{m_{\left( k\right)+1}}\right)  \leq \lim_{k\rightarrow \infty }\sup \eta\left( x_{n_{\left( k\right) }},x_{m_{\left( k\right)+1}}\right)\leq s\varepsilon ,$$
  $$\frac{\varepsilon}{s} \leq \lim_{k\rightarrow \infty }\inf \eta\left( x_{n_{\left( k\right)+1 }},x_{m_{\left( k\right)+1}}\right)  \leq \lim_{k\rightarrow \infty }\sup \eta\left( x_{n_{\left( k\right)+1 }},x_{m_{\left( k\right)+1}}\right)\leq s^{2}\varepsilon .$$
\end{lemma}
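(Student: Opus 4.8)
The plan is to assume $\{x_n\}$ is not Cauchy and extract the required subsequences by the classical ``two index sequences'' construction, but with the triangle inequality replaced everywhere by the $b$-rectangular inequality of Definition~2.1 and with care taken about which auxiliary points one inserts.

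First I would use the negation of the Cauchy condition: there is an $\varepsilon>0$ and, for each $k\in\mathbb{N}$, indices $m(k)>n(k)\ge k$ with $\eta(x_{m(k)},x_{n(k)})\ge\varepsilon$, and (since both (3.1) and (3.2) are in force) one may take the same $\varepsilon$ and the same pair of sequences so that also $\eta(x_{n(k)},x_{m(k)})\ge\varepsilon$. Choosing $m(k)$ \emph{minimal} with $\eta(x_{m(k)},x_{n(k)})\ge\varepsilon$ among indices larger than $n(k)$, one gets $\eta(x_j,x_{n(k)})<\varepsilon$ for all $n(k)<j<m(k)$. Since $\eta(x_{n(k)+1},x_{n(k)})\to0$ and $\eta(x_{n(k)+2},x_{n(k)})\to0$ by (3.1)--(3.2), for all large $k$ we have $m(k)\ge n(k)+3$, so the indices $m(k),m(k)-1,m(k)-2,n(k)$ are pairwise distinct; here, and throughout, I would also assume the $x_n$ pairwise distinct (the setting in which the lemma is applied), so that the $b$-rectangular inequality may legitimately be applied to these quadruples.

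Next I would prove the representative estimates and let the others follow by the same mechanism. The lower bound $\varepsilon\le\liminf_k\eta(x_{m(k)},x_{n(k)})$ is immediate; for the upper bound,
\[
\eta(x_{m(k)},x_{n(k)})\le s\left[\eta(x_{m(k)},x_{m(k)-1})+\eta(x_{m(k)-1},x_{m(k)-2})+\eta(x_{m(k)-2},x_{n(k)})\right]
\]
together with $\eta(x_{m(k)-2},x_{n(k)})<\varepsilon$ (minimality of $m(k)$) and (3.1) gives $\limsup_k\eta(x_{m(k)},x_{n(k)})\le s\varepsilon$; the reversed estimate for $\eta(x_{n(k)},x_{m(k)})$ is identical via (3.2). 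For the one-step-shifted terms one inserts only consecutive indices: from
\[
\eta(x_{m(k)+1},x_{n(k)+1})\le s\left[\eta(x_{m(k)+1},x_{m(k)})+\eta(x_{m(k)},x_{n(k)})+\eta(x_{n(k)},x_{n(k)+1})\right]
\]
one obtains $\limsup\eta(x_{m(k)+1},x_{n(k)+1})\le s\cdot s\varepsilon=s^{2}\varepsilon$, while from
\[
\eta(x_{m(k)},x_{n(k)})\le s\left[\eta(x_{m(k)},x_{m(k)+1})+\eta(x_{m(k)+1},x_{n(k)+1})+\eta(x_{n(k)+1},x_{n(k)})\right]
\]
one obtains $\liminf\eta(x_{m(k)+1},x_{n(k)+1})\ge\varepsilon/s$; the mixed quantities $\eta(x_{m(k)},x_{n(k)+1})$, $\eta(x_{n(k)},x_{m(k)+1})$ and their order-reversals are handled the same way, each time placing an already-bounded distance in the middle slot of the $b$-rectangular inequality and consecutive-index distances (which vanish by (3.1)--(3.2)) in the two outer slots.

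The main difficulty here is not analytic but combinatorial: each use of the $b$-rectangular inequality demands that its four arguments have pairwise distinct indices, so the real work is verifying these disjointness conditions for every combination among $m(k),n(k),m(k)\pm1,n(k)\pm1$ (which is exactly where $m(k)\ge n(k)+3$ and the pairwise distinctness of $\{x_n\}$ are used), and committing at the outset to a single $\varepsilon$ and a single pair $\{m(k)\},\{n(k)\}$ that simultaneously feeds the ``forward'' estimates through (3.1) and the ``backward'' estimates through (3.2). Once the index sequences are pinned down, every one of the displayed inequalities reduces to a routine passage to the lower/upper limit.
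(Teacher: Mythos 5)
Your proof follows essentially the same route as the paper's: negate Cauchyness to extract $\varepsilon>0$ and index sequences with a minimality-type condition ($\eta(x_{m(k)-2},x_{n(k)})<\varepsilon$ in your version, $\eta(x_{m(k)-1},x_{n(k)})<\varepsilon$ in the paper's), then apply the b-rectangular inequality with consecutive or two-step terms that vanish by the hypotheses and pass to upper and lower limits, so the only real difference is the cosmetic choice of which auxiliary points are inserted. One caveat: your claim that a single pair of sequences simultaneously yields the reversed-order bounds (both $\eta(x_{n(k)},x_{m(k)})\geq\varepsilon$ and the input needed for $\limsup_k\eta(x_{n(k)},x_{m(k)})\leq s\varepsilon$) is asserted rather than derived from your forward-minimality construction, but the paper's proof does exactly the same thing by fiat in its displayed conditions (3.3)--(3.4), so this is a shared weakness rather than a departure from the paper's argument.
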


\begin{proof}
If $ \lbrace x_n \rbrace $ s not a CS, then there exist $ \varepsilon >0 $ and two sequences $ \lbrace m_{\left( k\right)} \rbrace $ and $ \lbrace n_{\left( k\right)} \rbrace $ of positive integers:
\begin{equation}
 m(k)>n(k)>k, \ \varepsilon \leq \eta\left( x_{m_{\left( k\right) }},x_{n_{\left( k\right)}}\right) \ , \ \eta\left( x_{m_{\left( k\right)-1 }},x_{n_{\left( k\right)}}\right)< \varepsilon,
 \end{equation}
and
 \begin{equation}\label{f}
  \varepsilon \leq \eta\left( x_{n_{\left( k\right) }},x_{m_{\left( k\right)}}\right) \ , \ \eta\left( x_{n_{\left( k\right)-1 }},x_{m_{\left( k\right)}}\right)< \varepsilon,
 \end{equation}
for all positive integers $k$. By the b-rectangular inequality, we have
\begin{equation}\label{ss}
 \varepsilon \leq \eta\left( x_{m_{\left( k\right) }},x_{n_{\left( k\right)}}\right)\leq s\left[ \eta\left( x_{m_{\left( k\right) }},x_{m_{\left( k\right)+1}}\right)+\eta\left( x_{m_{\left( k\right) +1}},x_{m_{\left( k\right)-1}}\right)+\eta\left( x_{m_{\left( k\right)-1 }},x_{n_{\left( k\right)}}\right)\right].
\end{equation}
Taking the upper and lower limits as $ k\rightarrow \infty $ in $ (\ref{ss}) $, we obtain
 \begin{equation}
 \varepsilon \leq \lim_{k\rightarrow \infty }\inf \eta\left( x_{m_{\left( k\right) }},x_{n_{\left( k\right)}}\right)  \leq \lim_{k\rightarrow \infty }\sup \eta\left( x_{m_{\left( k\right) }},x_{n_{\left( k\right)}}\right)\leq s\varepsilon.
 \end{equation}
Using the b-rectangular inequality again, we have
  \begin{equation}\label{dd}
 \varepsilon \leq \eta\left( x_{n_{\left( k\right) }},x_{m_{\left( k\right)+1}}\right)\leq s\left[ \eta\left( x_{n_{\left( k\right) }},x_{m_{\left( k\right)-1}}\right)+\eta\left( x_{m_{\left( k\right) -1}},x_{m_{\left( k\right)}}\right)+\eta\left( x_{m_{\left( k\right) }},x_{m_{\left( k\right)+1}}\right)\right].
\end{equation}
 Taking the upper and lower limits as $ k\rightarrow \infty $ in $ (\ref{dd}) $, we obtain
  \begin{equation}
 \varepsilon \leq \lim_{k\rightarrow \infty }\inf \eta\left( x_{n_{\left( k\right) }},x_{m_{\left( k\right)+1}}\right)  \leq \lim_{k\rightarrow \infty }\sup \eta\left( x_{n_{\left( k\right) }},x_{m_{\left( k\right)+1}}\right)\leq s\varepsilon.
 \end{equation}
Going the same way, we can easily prove the rest of the inequalities.
\end{proof}

\begin{definition}
Let $(\mathcal{X},\eta)$ be a QRB-MS with parameter $s>1$ space and $\mathcal{T}:\mathcal{X}\rightarrow \mathcal{X}$ be a mapping. $\mathcal{T}$ is called a $\theta-$contraction if there exist $\theta \in \Theta $ and $r\in \left] 0,1\right[ $:
\begin{equation*}
\eta\left( \mathcal{T}x,\mathcal{T}y\right) >0\Rightarrow \theta \left[s^{2}\eta\left( \mathcal{T}x,\mathcal{T}y\right)\right]  \leq  \theta \left[ \eta\left( x,y\right) \right] ^{r}.
\end{equation*}
\end{definition}

\begin{theorem}\label{Theorem3.5}
Let $\left( \mathcal{X},\eta\right) $ be a complete QRB-MS and let $\mathcal{T}:\mathcal{X}\rightarrow \mathcal{X}$ be an $ \theta $-contraction, i.e, there exist $\theta \in \Theta $ and $r\in \left] 0,1\right[ $: for any $ x,y \in \mathcal{X} $, we have
\begin{equation}\label{3.1}
\eta\left( \mathcal{T}x,\mathcal{T}y\right) >0\Rightarrow \theta \left[s^{2}\eta\left( \mathcal{T}x,\mathcal{T}y\right)\right]  \leq  \theta \left[ \eta\left( x,y\right) \right] ^{r}.
\end{equation}
Then $\mathcal{T}$ has a unique FP.
\end{theorem}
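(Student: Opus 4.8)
The plan is a Picard-iteration argument in the spirit of the Jleli--Samet and Zheng et al.\ proofs, adapted to the rectangular quasi-$b$ setting through Lemmas \ref{lemmkari} and \ref{4.4}. Fix $x_{0}\in\mathcal{X}$ and set $x_{n+1}=\mathcal{T}x_{n}$. If $x_{n_{0}}=x_{n_{0}+1}$ for some $n_{0}$ we are done, so assume $\eta(x_{n},x_{n+1})>0$ and $\eta(x_{n+1},x_{n})>0$ for every $n$. Since $s>1$ and $\theta$ is increasing, applying \eqref{3.1} to the pair $(x_{n-1},x_{n})$ gives $\theta[\eta(x_{n},x_{n+1})]\le\theta[s^{2}\eta(x_{n},x_{n+1})]\le\theta[\eta(x_{n-1},x_{n})]^{r}$, and iterating, $1<\theta[\eta(x_{n},x_{n+1})]\le\theta[\eta(x_{0},x_{1})]^{r^{n}}\to 1$; by the defining property of $\Theta$ this forces $\eta(x_{n},x_{n+1})\to 0$, and the same computation applied to $(x_{n-1},x_{n+1})$ and to the reversed distances yields $\eta(x_{n},x_{n+2})\to 0$, $\eta(x_{n+1},x_{n})\to 0$, $\eta(x_{n+2},x_{n})\to 0$. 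Next I would check that the iterates are pairwise distinct: if $x_{p}=x_{q}$ with $p<q$, then applying $\mathcal{T}$ repeatedly makes $\{x_{n}\}$ periodic of period $q-p$ from index $p$ on, so the value $\eta(x_{p},x_{p+1})$ recurs infinitely often in the null sequence $\{\eta(x_{n},x_{n+1})\}$, whence $\eta(x_{p},x_{p+1})=0$, a contradiction. Thus all $x_{n}$ are distinct and Lemmas \ref{lemmkari} and \ref{4.4} become applicable.

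To show $\{x_{n}\}$ is Cauchy, suppose it is not. By Lemma \ref{4.4} there are $\varepsilon>0$ and index sequences $m(k)>n(k)$ with $\limsup_{k}\eta(x_{m(k)},x_{n(k)})\le s\varepsilon$ and $\liminf_{k}\eta(x_{m(k)+1},x_{n(k)+1})\ge\varepsilon/s$; in particular $\eta(x_{m(k)+1},x_{n(k)+1})=\eta(\mathcal{T}x_{m(k)},\mathcal{T}x_{n(k)})>0$ for large $k$, so \eqref{3.1} gives $\theta[s^{2}\eta(x_{m(k)+1},x_{n(k)+1})]\le\theta[\eta(x_{m(k)},x_{n(k)})]^{r}$. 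Passing to a subsequence along which both distances converge, say to $b\ge\varepsilon/s$ and $a\le s\varepsilon$ (both positive and finite), continuity of $\theta$ yields $\theta[s^{2}b]\le\theta[a]^{r}$. But $s^{2}b\ge s\varepsilon\ge a$, so by monotonicity $\theta[a]\le\theta[s^{2}b]\le\theta[a]^{r}<\theta[a]$, the last inequality because $\theta[a]>1$ and $r\in(0,1)$ --- a contradiction. The reversed family of inequalities in Lemma \ref{4.4} disposes of the backward Cauchy property in exactly the same way, so by completeness $\{x_{n}\}$ converges to some $z\in\mathcal{X}$, i.e.\ $\eta(z,x_{n})\to 0$ and $\eta(x_{n},z)\to 0$.

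To see that $z$ is a fixed point, observe that for large $n$ one has $x_{n+1}\ne\mathcal{T}z$ (otherwise $\mathcal{T}z$ would coincide with infinitely many distinct $x_{n+1}$), so \eqref{3.1} applies to $(z,x_{n})$ and to $(x_{n},z)$, giving $\theta[s^{2}\eta(\mathcal{T}z,x_{n+1})]\le\theta[\eta(z,x_{n})]^{r}$ and $\theta[s^{2}\eta(x_{n+1},\mathcal{T}z)]\le\theta[\eta(x_{n},z)]^{r}$. Since $\eta(z,x_{n})\to 0$ and $\eta(x_{n},z)\to 0$, the right-hand sides tend to $1$, hence $\theta[s^{2}\eta(\mathcal{T}z,x_{n+1})]\to 1$ and $\theta[s^{2}\eta(x_{n+1},\mathcal{T}z)]\to 1$, so $\eta(\mathcal{T}z,x_{n})\to 0$ and $\eta(x_{n},\mathcal{T}z)\to 0$. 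Thus the Cauchy sequence $\{x_{n}\}$, whose terms are pairwise distinct, converges both to $z$ and to $\mathcal{T}z$, and Lemma \ref{lemmkari} forces $z=\mathcal{T}z$. For uniqueness, if $z,w$ were distinct fixed points then $\eta(z,w)=\eta(\mathcal{T}z,\mathcal{T}w)>0$ and \eqref{3.1} gives $\theta[\eta(z,w)]\le\theta[s^{2}\eta(z,w)]\le\theta[\eta(z,w)]^{r}<\theta[\eta(z,w)]$, a contradiction.

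I expect the Cauchy step to be the main obstacle: the argument hinges on the precise constants supplied by Lemma \ref{4.4}, namely that $s^{2}\cdot(\varepsilon/s)=s\varepsilon$ still dominates the $s\varepsilon$ upper bound on $\eta(x_{m(k)},x_{n(k)})$, which is exactly what allows the monotonicity of $\theta$ together with the strict inequality $\theta[a]^{r}<\theta[a]$ to produce a contradiction. This is the reason the contraction is stated with the factor $s^{2}$ inside $\theta$; a weaker quadrilateral-type estimate, or a smaller factor, would not close the loop. The remaining delicate points --- distinctness of the iterates and uniqueness of the limit in a space where limits need not otherwise be unique --- are handled, respectively, by the periodicity observation above and by Lemma \ref{lemmkari}.
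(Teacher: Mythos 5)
Your argument is correct, and its skeleton (Picard iteration, the four limits of consecutive and skip-one distances, Lemma \ref{4.4} for the forward/backward Cauchy contradiction, completeness, then uniqueness) matches the paper's proof; but you diverge in two useful ways. First, for the identification of the fixed point the paper invokes Lemma \ref{2.3} to sandwich $\limsup_n \eta(\mathcal{T}x_n,\mathcal{T}z)$ between $\tfrac{1}{s}\eta(z,\mathcal{T}z)$ and $s\,\eta(z,\mathcal{T}z)$ and then forces the contradiction $s<1$; you instead feed $\eta(z,x_n)\to 0$ and $\eta(x_n,z)\to 0$ through \eqref{3.1} and the defining property of $\Theta$ to get $\eta(\mathcal{T}z,x_{n+1})\to 0$ and $\eta(x_{n+1},\mathcal{T}z)\to 0$, so that the Cauchy sequence has both $z$ and $\mathcal{T}z$ as forward/backward limits, and Lemma \ref{lemmkari} gives $z=\mathcal{T}z$. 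Your route avoids Lemma \ref{2.3} altogether (including the upper estimate $\limsup\eta(\mathcal{T}x_n,\mathcal{T}z)\le s\,\eta(z,\mathcal{T}z)$ that the paper uses but never actually proves in that lemma) and is the cleaner option in a quasi-metric setting where limits need not be unique; the paper's route, when it works, yields the slightly more quantitative sandwich estimate but buys nothing extra here. Second, you supply two verifications the paper leaves implicit: the pairwise distinctness of the iterates (needed both to apply Lemmas \ref{lemmkari} and \ref{4.4} and to keep $\theta$ evaluated at positive arguments), and the passage to a subsequence along which $\eta(x_{m(k)},x_{n(k)})$ and $\eta(x_{m(k)+1},x_{n(k)+1})$ actually converge before letting $k\to\infty$ in the contraction inequality, whereas the paper writes these limits as if they exist. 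One small ordering remark: you derive $\eta(x_n,x_{n+2})\to 0$ before proving distinctness, while that derivation tacitly assumes $\eta(x_k,x_{k+2})>0$ along the chain; since your periodicity argument only uses $\eta(x_n,x_{n+1})\to 0$, simply prove distinctness immediately after the consecutive-distance limits and the issue disappears.
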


\begin{proof}
Let $x_{0}\in \mathcal{X}$ be an arbitrary point in $ \mathcal{X} $ and define a sequence $\left\lbrace x_{n}\right\rbrace$ by
$$x_{n+1} =\mathcal{T}x_{n}=\mathcal{T}^{n+1}x_{0},$$
for all $n\in \mathbb{N}.$ If there exists $n_0\in \mathbb{N}$ such that $\eta\left( x_{n_0},x_{n_0+1}\right) =0$, then proof is finished.\\
We can suppose that $\eta\left( x_{n},x_{n+1}\right) >0$ for all $n\in \mathbb{N}$.
Substituting $x=x_{n-1}$ and $y=x_{n}$, from $(\ref{3.1})$, for all $n\in  $ $\mathbb{N}$, we have
\begin{equation}\label{3.2}
\theta \left[ \eta\left( x_{n},x_{n+1}\right)\right]\leq \theta \left[ s^{3}\eta\left( x_{n},x_{n+1}\right)\right] \leq \left[ \theta \left( \eta\left(x_{n-1},x_{n}\right) \right) \right] ^{r},\forall n\in \mathbb{N}
\end{equation}
Repeating this step, we conclude that
\begin{equation*}\theta \left( \eta\left( x_{n},x_{n+1}\right) \right)  \leq \left(\theta \left( \eta\left( x_{n-1},x_{n}\right) \right) \right) ^{r} \\
\leq \left(\theta \left( \eta\left( x_{n-2},x_{n-1}\right) \right) \right) ^{r^{2}}\leq ...\leq\theta\left( \eta\left( x_{0},x_{1}\right)\right) ^{r^{n}}.
 \end{equation*}
From $(3.3) $ and using $\left(\theta _{1}\right) $ we get
\begin{equation}
\eta\left( x_{n},x_{n+1}\right) <\eta\left( x_{n-1},x_{n}\right).
\end{equation}
Therefore, $\eta\left( x_{n,}x_{n+1}\right) _{n\in \mathbb{N}}$ is monotone strictly decreasing sequence of non negative real numbers. Consequently, there exists $\alpha \geq 0$:
\begin{equation*}
\lim_{n\rightarrow \infty }\eta\left( x_{n+1,}x_{n}\right) \ \ =\alpha.
\end{equation*}
Now, we claim that $\alpha=0$. Arguing by contradiction, we assume that $\alpha >0.$ Since $\eta\left( x_{n,}x_{n+1}\right) _{n\in \mathbb{N}}$ is a non negative decreasing sequence, then we have
\begin{equation*}
\eta\left( x_{n,}x_{n+1}\right)  \geq \alpha \text{ \ \ }\forall n\in \mathbb{N}.\\
\end{equation*}
By property of $ \theta $ we get,
\begin{equation}\label{co}
1<\theta \left( \alpha\right) \leq \theta \left(\eta\left(x_{0},x_{1}\right)\right) ^{r^{n}}.
\end{equation}
By letting $n\rightarrow \infty $ in inequality $\left( \ref{co}\right),$ we obtain
\begin{equation*}
1<\theta \left(\alpha \right) \leq 1.
\end{equation*}
It is a contradiction. Therefore,
\begin{equation}\label{3.6}
\lim_{n\rightarrow \infty }\eta\left( x_{n,}x_{n+1}\right) =0.
\end{equation}
Substituting $x=x_{n-1}$ and $y=x_{n+1}$, from $(\ref{3.1})$, for all $n\in  $ $\mathbb{N}$, we have
\begin{equation}\label{3.20}
\theta \left[ \eta\left( x_{n},x_{n+2}\right)\right]\leq \theta \left[ s^{2}\eta\left( x_{n},x_{n+2}\right)\right] \leq \left[ \theta \left( \eta\left(x_{n-1},x_{n}\right) \right) \right] ^{r},\forall n\in \mathbb{N}.
\end{equation}

Repeating this step, we conclude that
\begin{equation*}\theta \left( \eta\left( x_{n},x_{n+2}\right) \right)  \leq \left(\theta \left( \eta\left( x_{n-1},x_{n+1}\right) \right) \right) ^{r} \\
\leq \left(\theta \left( \eta\left( x_{n-2},x_{n}\right) \right) \right) ^{r^{2}}\leq ...\leq\theta\left( \eta\left( x_{0},x_{2}\right)\right) ^{r^{n}}.
 \end{equation*}
From $(\ref{3.20}) $ and using $\left( \theta _{1}\right) $ we get
\begin{equation}
\eta\left( x_{n},x_{n+2}\right) <\eta\left( x_{n-1},x_{n+1}\right).
\end{equation}

Therefore, $\eta\left( x_{n,}x_{n+2}\right) _{n\in \mathbb{N}}$ is monotone strictly decreasing sequence of non negative real\\ numbers. Consequently, there exists $\delta \geq 0$:
\begin{equation*}
\lim_{n\rightarrow \infty }\eta\left( x_{n+1,}x_{n}\right) \ \ =\delta.\\
\end{equation*}
Now, we claim that $\alpha =0$. Arguing by contradiction, we assume that $\delta >0.$ Since $\eta\left( x_{n,}x_{n+2}\right) _{n\in \mathbb{N}}$ is a non negative decreasing sequence, then we have
\begin{equation*}
\eta\left( x_{n,}x_{n+2}\right)  \geq \delta \text{ \ \ }\forall n\in \mathbb{N}.
\end{equation*}

By property of $ \theta $ we get,
\begin{equation}
1<\theta \left( \delta\right) \leq \theta \left(\eta\left(x_{0},x_{2}\right)\right) ^{r^{n}}.
\end{equation}
By letting $n\rightarrow \infty $ in inequality $\left( 3.5\right),$ we obtain
\begin{equation*}
1<\theta \left(\delta \right) \leq 1.
\end{equation*}
It is a contradiction. Therefore,
\begin{equation}\label{3.6}
\lim_{n\rightarrow \infty }\eta\left( x_{n,}x_{n+2}\right) =0.
\end{equation}
Substituting $x=x_{n}$ and $y=x_{n-1}$, from $(\ref{3.1})$, for all $n\in  $ $\mathbb{N}$, we have
\begin{equation}\label{3.21}
\theta \left[ \eta\left( x_{n+1},x_{n}\right)\right]\leq \theta \left[ s^{2}\eta\left( x_{n+1},x_{n}\right)\right] \leq \left[ \theta \left( \eta\left(x_{n},x_{n-1}\right) \right) \right] ^{r},\forall n\in \mathbb{N}.
\end{equation}

Repeating this step, we conclude that
\begin{equation*}\theta \left( \eta\left( x_{n},x_{n+1}\right) \right)  \leq \left(\theta \left( \eta\left( x_{n-1},x_{n}\right) \right) \right) ^{r} \\
\leq \left(\theta \left( \eta\left( x_{n-2},x_{n-1}\right) \right) \right) ^{r^{2}}\leq ...\leq\theta\left( \eta\left( x_{0},x_{1}\right)\right) ^{r^{n}}.
 \end{equation*}
From $(\ref{3.21}) $ and using $\left( \theta _{1}\right) $ we get
\begin{equation}
\eta\left( x_{n+1},x_{n}\right) <\eta\left( x_{n},x_{n-1}\right).
\end{equation}
Therefore, $d\left( x_{n+1,}x_{n}\right) _{n\in \mathbb{N}}$ is monotone strictly decreasing sequence of non negative real numbers. Consequently, there exists $\lambda \geq 0$:
\begin{equation*}
\lim_{n\rightarrow \infty }\eta\left( x_{n+1,}x_{n}\right) \ \ =\lambda.\\
\end{equation*}
Now, we claim that $\lambda =0$. Arguing by contradiction, we assume that $\alpha >0.$ Since $\eta\left( x_{n+1,}x_{n}\right) _{n\in \mathbb{N}}$ is a non negative decreasing sequence, then we have
\begin{equation*}
\eta\left( x_{n+1,}x_{n}\right)  \geq \alpha \text{ \ \ }\forall n\in \mathbb{N}.
\end{equation*}
By property of $ \theta $ we get,
\begin{equation}\label{qq}
1<\theta \left(\lambda\right) \leq \theta \left(\eta\left(x_{1},x_{0}\right)\right) ^{r^{n}}.
\end{equation}
By letting $n\rightarrow \infty $ in inequality $\left( \ref{qq}\right),$ we obtain
\begin{equation*}
1<\theta \left(\lambda \right) \leq 1.
\end{equation*}
It is a contradiction. Therefore,
\begin{equation}\label{3.6}
\lim_{n\rightarrow \infty }\eta\left( x_{n+1,}x_{n}\right) =0.
\end{equation}
Substituting $x=x_{n+1}$ and $y=x_{n-1}$, from $(\ref{3.1})$, for all $n\in  $ $\mathbb{N}$, we have
\begin{equation}\label{3.2}
\theta \left[ \eta\left( x_{n+2},x_{n}\right)\right]\leq \theta \left[ s^{2}\eta\left( x_{n+2},x_{n}\right)\right] \leq \left[ \theta \left( \eta\left(x_{n},x_{n-1}\right) \right) \right] ^{r},\forall n\in \mathbb{N}.
\end{equation}
Repeating this step, we conclude that
\begin{equation*}\theta \left( \eta\left( x_{n+2},x_{n}\right) \right)  \leq \left(\theta \left( \eta\left( x_{n+1},x_{n-1}\right) \right) \right) ^{r} \\
\leq \left(\theta \left( \eta\left( x_{n},x_{n-2}\right) \right) \right) ^{r^{2}}\leq ...\leq\theta\left( \eta\left( x_{2},x_{0}\right)\right) ^{r^{n}}.
 \end{equation*}
By $(3.36) $ and using $\left( \theta _{1}\right) $ we get
\begin{equation}
\eta\left( x_{n+2},x_{n}\right) <\eta\left( x_{n+1},x_{n-1}\right).
\end{equation}
Therefore, $d\left( x_{n+2,}x_{n}\right) _{n\in \mathbb{N}}$ is monotone strictly decreasing sequence of non negative real numbers. Consequently, there exists $\beta \geq 0$:
\begin{equation*}
\lim_{n\rightarrow \infty }\eta\left( x_{n+2,}x_{n}\right) \ \ =\beta.
\end{equation*}
Now, we claim that $\beta =0$. Arguing by contradiction, we assume that $\delta >0.$ Since $\eta\left( x_{n+2,}x_{n}\right) _{n\in \mathbb{N}}$ is a non negative decreasing sequence, then we have
\begin{equation*}
\eta\left( x_{n+2,}x_{n}\right)  \geq \beta \text{ \ \ }\forall n\in \mathbb{N}.
\end{equation*}
By property of $ \theta $ we get,
\begin{equation}\label{xx}
1<\theta \left( \beta\right) \leq \theta \left(\eta\left(x_{2},x_{0}\right)\right) ^{r^{n}}.
\end{equation}
By letting $n\rightarrow \infty $ in inequality $\left( \ref{xx}\right),$ we obtain
\begin{equation*}
1<\theta \left(\beta\right) \leq 1.
\end{equation*}
It is a contradiction. Therefore,
\begin{equation}\label{3.6}
\lim_{n\rightarrow \infty }\eta\left( x_{n+2},x_{n}\right) =0.
\end{equation}
Firstly we show  $\left\lbrace  x_{n}\right\rbrace  _{n\in \mathbb{N}}$ is a right-CS, if otherwise there exists an $\varepsilon $ $>0$ for which we can find sequences of positive integers $\left\lbrace n_{\left( k\right) }\right\rbrace $ and $\left\lbrace m_{\left( k\right) }\right\rbrace $ such that, for all positive integers
$  k, n_{k} > m_{k} > k$. By Lemma $ (\ref{4.4}) $, we have
$$\varepsilon \leq \lim_{k\rightarrow \infty }\inf \eta\left( x_{m_{\left( k\right) }},x_{n_{\left( k\right)}}\right)  \leq \lim_{k\rightarrow \infty }\sup \eta\left( x_{m_{\left( k\right) }},x_{n_{\left( k\right)}}\right)\leq s\varepsilon ,$$
  $$\varepsilon \leq \lim_{k\rightarrow \infty }\inf \eta\left( x_{n_{\left( k\right) }},x_{m_{\left( k\right)+1}}\right)  \leq \lim_{k\rightarrow \infty }\sup \eta\left( x_{n_{\left( k\right) }},x_{m_{\left( k\right)+1}}\right)\leq s\varepsilon ,$$
  $$\varepsilon \leq \lim_{k\rightarrow \infty }\inf \eta\left( x_{m_{\left( k\right) }},x_{n_{\left( k\right)+1}}\right)  \leq \lim_{k\rightarrow \infty }\sup \eta\left( x_{m_{\left( k\right) }},x_{n_{\left( k\right)+1}}\right)\leq s\varepsilon ,$$
  $$\frac{\varepsilon}{s} \leq \lim_{k\rightarrow \infty }\inf \eta\left( x_{m_{\left( k\right)+1 }},x_{n_{\left( k\right)+1}}\right)  \leq \lim_{k\rightarrow \infty }\sup \eta\left( x_{m_{\left( k\right)+1 }},x_{n_{\left( k\right)+1}}\right)\leq s^{2}\varepsilon $$
Applying $ (\ref{3.1}) $ with $ x=x_{m_{\left( k\right) }} $ and  $ y=x_{n_{\left( k\right) }}$, we obtain
\begin{equation}
\theta \left[ s^{2}\eta\left( x_{m_{\left( k\right) +1}},x_{n_{\left( k\right)+1}}\right)\right]  \leq \left[ \theta \left( M\left( x_{m_{\left( k\right)}},x_{n_{\left( k\right) }}\right) \right)\right]  ^{r}.
\end{equation}
Letting $ k\rightarrow \infty $ the above inequality, applying the continuity of $ \theta $, we obtain
\begin{equation*}
\theta\left( \frac{\varepsilon}{s}s^{2}\right) =\theta\left( \varepsilon s\right) \leq \theta \left(s^{2}\lim_{k\rightarrow \infty }\eta\left( x_{m_{\left( k\right)+1}},x_{n_{\left( k\right) +1}}\right)\right) \leq  \left[ \theta \left ( \lim_{k\rightarrow \infty }\eta\left( x_{m_{\left( k\right)}},x_{n_{\left( k\right) }}\right) \right)\right]  ^{r}.
\end{equation*}
Therefore, $$ \theta(s\varepsilon)\leq \left[ \theta(s\varepsilon)\right]^{r} <\theta(s\varepsilon). $$
Since $ \theta $ is increasing, and continuous function, we get
\begin{equation*}
 s \varepsilon  <s \varepsilon,
\end{equation*}
which is a contradiction. Then
\begin{equation*}
\lim_{n,m\rightarrow \infty }\eta\left( x_{m},x_{n}\right) =0.
\end{equation*}
Consequently $\left\lbrace  x_{n}\right\rbrace  $ is a right-Cauchy in $\mathcal{X}$. By completeness of $\left( \mathcal{X},\eta\right)$, there exists $z\in \mathcal{X}$:
\begin{equation*}
\lim_{n\rightarrow \infty }\eta\left( x_{n},z\right)=0.
\end{equation*}
Secondly we show $\left\lbrace  x_{n}\right\rbrace  _{n\in \mathbb{N}}$ is a left-CS, if otherwise there exists an $\varepsilon $ $>0$ for which we can find sequences of positive integers $\left\lbrace m_{\left( k\right) }\right\rbrace $ and $\left\lbrace n_{\left( k\right) }\right\rbrace $ such that, for all positive integers
$  k, m_{k} > n_{k} > k$. By Lemma $ (\ref{4.4}) $, we have
 $$\varepsilon \leq \lim_{k\rightarrow \infty }\inf \eta\left( x_{n_{\left( k\right) }},x_{m_{\left( k\right)}}\right)  \leq \lim_{k\rightarrow \infty }\sup \eta\left( x_{n_{\left( k\right) }},x_{m_{\left( k\right)}}\right)\leq s\varepsilon ,$$
  $$\varepsilon \leq \lim_{k\rightarrow \infty }\inf \eta\left( x_{m_{\left( k\right) }},x_{n_{\left( k\right)+1}}\right)  \leq \lim_{k\rightarrow \infty }\sup \eta\left( x_{m_{\left( k\right) }},x_{n_{\left( k\right)+1}}\right)\leq s\varepsilon ,$$
  $$\varepsilon \leq \lim_{k\rightarrow \infty }\inf \eta\left( x_{n_{\left( k\right) }},x_{m_{\left( k\right)+1}}\right)  \leq \lim_{k\rightarrow \infty }\sup \eta\left( x_{n_{\left( k\right) }},x_{m_{\left( k\right)+1}}\right)\leq s\varepsilon ,$$
  $$\frac{\varepsilon}{s} \leq \lim_{k\rightarrow \infty }\inf \eta\left( x_{n_{\left( k\right)+1 }},x_{m_{\left( k\right)+1}}\right)  \leq \lim_{k\rightarrow \infty }\sup \eta\left( x_{n_{\left( k\right)+1 }},x_{m_{\left( k\right)+1}}\right)\leq s^{2}\varepsilon .$$
Applying $ (\ref{3.1}) $ with $ x=x_{n_{\left( k\right) }} $ and  $ y=x_{m_{\left( k\right) }}$, we obtain
\begin{equation}\label{ccc}
\theta \left[ s^{2}\eta\left( x_{n_{\left( k\right) +1}},x_{m_{\left( k\right)+1}}\right)\right]  \leq \left[ \theta \left( \eta\left( x_{n_{\left( k\right)}},x_{m_{\left( k\right) }}\right) \right)\right]  ^{r}.
\end{equation}
Letting $ k\rightarrow \infty $ the above inequality, applying the continuity of $ \theta $ and using  $(\ref{ccc} )$, we obtain
\begin{equation*}
\theta\left( \frac{\varepsilon}{s}s^{2}\right) =\theta\left( \varepsilon s\right) \leq \theta \left(s^{2}\lim_{k\rightarrow \infty }\eta\left( x_{n_{\left( k\right)+1}},x_{m_{\left( k\right) +1}}\right)\right) \leq  \left[ \theta \left ( \lim_{k\rightarrow \infty }\eta\left( x_{n_{\left( k\right)}},x_{m_{\left( k\right) }}\right) \right)\right] ^{r}.
\end{equation*}
Therefore, $$ \theta(s\varepsilon)\leq \left[ \theta(s\varepsilon)\right]^{r} <\theta(s\varepsilon). $$
Since $ \theta $ is increasing, we get
\begin{equation*}
 s \varepsilon  <s \varepsilon,
\end{equation*}
which is a contradiction. Then
\begin{equation*}
\lim_{n,m\rightarrow \infty }\eta\left( x_{n},x_{m}\right) =0.
\end{equation*}
Consequently $\left\lbrace  x_{n}\right\rbrace  $ is a left-Cauchy in $\mathcal{X}$. By completeness of $\left( \mathcal{X},\eta\right) ,$ there exists $u\in \mathcal{X}$:
\begin{equation*}
\lim_{n\rightarrow \infty }\eta\left( u,x_{n}\right) =0.
\end{equation*}
By the lemma $  \ref{lemmkari}$, we conclude that $ z=u $.
Now, we show that $\eta\left( \mathcal{T}z,z\right) =0$ or $\eta\left( z,\mathcal{T}z\right) =0$ arguing by contradiction, we assume that
\begin{equation*}
 \eta\left( \mathcal{T}z,z\right)>0.
\end{equation*}
Since $ x_{n} $ forward converges to $x$ as $ n\rightarrow \infty $ for all $ n\in \mathbf{N} $, then from Lemma $ (\ref{2.3}) $, we conclude that
\begin{equation}\label{12}
\frac{1}{s}\eta\left( z,\mathcal{T}z\right)\leq \lim_{n\rightarrow \infty }\sup \eta\left(\mathcal{T}x_{n},\mathcal{T}z\right) \leq s\eta\left( z,\mathcal{T}z\right).
\end{equation}
Now, applying $\left(\ref{3.1}\right) $ with $ x=x_n $ and $ y=z $, we have
\begin{equation}\label{22}
\theta \left( s^{2}\eta\left( \mathcal{T}x_{n},\mathcal{T}z\right)\right) \leq \left[ \theta\left( \eta\left( x_{n},z\right) \right) \right] ^{r},\text{ }\forall n\in \mathbb{N}.
\end{equation}
By letting $n\rightarrow \infty $ in inequality $(\ref{22}) $,  using $(\ref{12} ) $ and $\theta_3 $ we obtain
\begin{align*}
\theta\left[ s^{2}\frac{1}{s}\eta\left( z,\mathcal{T}z\right)\right]& =\theta\left[s \eta\left( z,\mathcal{T}z\right)\right] \\
&\leq \theta\left[ s^{2}\lim_{n\rightarrow  \infty } \eta\left(\mathcal{T}x_{n},\mathcal{T}z\right)\right] \\
& \leq \left[\theta \left( \eta\left( z,\mathcal{T}z\right)\right) \right]^{r}\\
&<\theta \left( \eta\left( z,\mathcal{T}z\right)\right).
\end{align*}
By $\left( \theta _{1}\right)$ and $\left( \theta _{3}\right)$, we get
$$ s\eta (z, \mathcal{T}z) < \eta (z, \mathcal{T}z),$$
therefore
$$ \eta (z,\mathcal{T}z) (s - 1) < 0 \Rightarrow  s < 1.$$
Which is a contradiction. Hence  $  \eta\left( \mathcal{T}z,z\right)=0$.
Next applying $\left(\ref{3.1}\right) $ with $ x=z $ and $ y=x_n $, we have
\begin{equation}\label{134}
\theta \left( s^{2}\eta\left( \mathcal{T}z,\mathcal{T}x_{n}\right)\right) \leq \left[ \theta\left( \eta\left( z,x_{n}\right) \right) \right] ^{r},\text{ }\forall n\in \mathbb{N},
\end{equation}
Since $ x_{n} $ backward converges to $x$ as $ n\rightarrow \infty $ for all $ n\in \mathbf{N} $, then from Lemma $ (\ref{2.3}) $, we conclude that
\begin{equation}\label{13}
\frac{1}{s}d\left(\mathcal{T} z,z\right)\leq \lim_{n\rightarrow \infty }\sup \eta\left(\mathcal{T}z,\mathcal{T}x_{n}\right) \leq s\eta\left( \mathcal{T}z,z\right).
\end{equation}
By letting $n\rightarrow \infty $ in inequality $(\ref{13}) $, using $(\ref{134} ) $ and $\theta_3 $ we obtain
\begin{align*}
\theta\left[ s^{2}\frac{1}{s}\eta\left( \mathcal{T}z,z\right)\right]& =\theta\left[s \eta\left( \mathcal{T}z,z\right)\right] \\
&\leq \theta\left[ s^{2}\lim_{n\rightarrow  \infty } \eta\left(\mathcal{T}z,\mathcal{T}x_{n}\right)\right] \\
& \leq \left[\theta \left( \eta\left( \mathcal{T}z,z\right)\right) \right]^{r}\\
&<\theta \left( \eta\left( \mathcal{T}z,z\right)\right).
\end{align*}
By $\left( \theta _{1}\right)$ and $\left( \theta _{3}\right)$, we get
$$ s\eta (\mathcal{T}z, z) < \eta (\mathcal{T}z, z),$$
therefore
$$ \eta (\mathcal{T}z,z) (s - 1) < 0 \Rightarrow  s < 1.$$
Which is a contradiction. Hence  $  \eta\left( \mathcal{T}z,z\right)=0$.\\
To prove the uniqueness. Now, suppose that $z,u\in \mathcal{X}$ are two FPs of $\mathcal{T}$ such that $u\neq z$. Therefore, we have
\begin{equation*}
\eta\left( z,u\right) =\eta\left( \mathcal{T}z,\mathcal{T}u\right) >0.
\end{equation*}
Applying $\left( \ref{3.1}\right) $ with $ x=z $ and $ y=u $, we have
\begin{equation*}
\theta \left( \eta\left( z,u\right)\right)=\theta \left( \eta\left( \mathcal{T}u,\mathcal{T}z\right)\right)\leq \theta \left(s^{2} \eta\left( \mathcal{T}u,\mathcal{T}z\right)\right) \leq \left[ \theta \left( \eta\left( z,u\right)\right) \right] ^{r}
\end{equation*}
\begin{equation*}
\theta \left( \eta\left( z,u\right)\right)\leq\left[ \theta\left(\eta\left( z,u\right)\right) \right] ^{r} \\
<\theta \left(\eta\left( z,u\right)\right) \\
\end{equation*}
which implies that
\begin{equation*}
\eta\left( z,u\right) <\eta\left( z,u\right),
\end{equation*}
which is a contradiction. Therefore $u=z$.
\end{proof}
\begin{corollary}
Let $\left( \mathcal{X},\eta\right) $ be a complete QRB-MS and $\mathcal{T}:\mathcal{X}\rightarrow \mathcal{X}$ $\ $be given mapping. Suppose that there exist $\theta\in \Theta$ and $k\in \left] 0,1\right[ $ such that for any $x,y\in \mathcal{X},$\\ we have
\end{corollary}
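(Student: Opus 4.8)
The plan is to read this corollary off Theorem~\ref{Theorem3.5}. Since $k\in\,]0,1[$, the power map $\phi(t)=t^{k}$ belongs to $\Phi$: it is continuous and nondecreasing on $[1,+\infty[$, $\phi(1)=1$, and for every $t\in\,]1,+\infty[$ one has $\phi^{n}(t)=t^{k^{n}}\to 1$ because $k^{n}\to 0$ (in particular $t^{k}<t$ for $t>1$, which is Lemma~\ref{2.7}). If the right‑hand side of the assumed inequality is simply $[\theta(\eta(x,y))]^{k}$, then the hypothesis is verbatim condition $(\ref{3.1})$ with $r=k$ and Theorem~\ref{Theorem3.5} already yields the unique fixed point; and if the hypothesis is a plain Lipschitz‑type bound such as $\eta(\mathcal{T}x,\mathcal{T}y)\le\tfrac{k}{s^{2}}\eta(x,y)$, one takes $\theta(t)=e^{\sqrt{t}}\in\Theta$, which turns it into $\theta[s^{2}\eta(\mathcal{T}x,\mathcal{T}y)]\le[\theta(\eta(x,y))]^{\sqrt{k}}$, i.e.\ again $(\ref{3.1})$ with $r=\sqrt{k}\in\,]0,1[$. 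So the only case needing real work is the one in which the majorant is $[\theta(N(x,y))]^{k}$ for a $\max$‑type quantity $N(x,y)$ built from $\eta(x,y)$, $\eta(x,\mathcal{T}x)$ and $\eta(y,\mathcal{T}y)$; there I would replay the proof of Theorem~\ref{Theorem3.5} while tracking the extra terms.

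Fix $x_{0}\in\mathcal{X}$ and set $x_{n+1}=\mathcal{T}x_{n}$; assume $\eta(x_{n},x_{n+1})>0$ for all $n$, otherwise we are done. The key observation is that $N(x_{n-1},x_{n})$ collapses to $\eta(x_{n-1},x_{n})$: if instead $\eta(x_{n},x_{n+1})\ge\eta(x_{n-1},x_{n})$ then $N(x_{n-1},x_{n})=\eta(x_{n},x_{n+1})$, and since $\theta$ is increasing and $s>1$ the contraction gives $\theta[\eta(x_{n},x_{n+1})]\le\theta[s^{2}\eta(x_{n},x_{n+1})]\le[\theta(\eta(x_{n},x_{n+1}))]^{k}<\theta[\eta(x_{n},x_{n+1})]$, a contradiction. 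Hence $\theta[\eta(x_{n},x_{n+1})]\le[\theta(\eta(x_{n-1},x_{n}))]^{k}$, and the same reasoning applies to the pairs $(x_{n},x_{n+2})$, $(x_{n+1},x_{n})$ and $(x_{n+2},x_{n})$; the four consecutive‑distance limits appearing in Lemma~\ref{4.4} are then forced to $0$ exactly as in Theorem~\ref{Theorem3.5}, the right‑hand sides becoming $[\theta(\eta(x_{0},x_{1}))]^{k^{n}}\to 1$ and $[\theta(\eta(x_{0},x_{2}))]^{k^{n}}\to 1$.

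With Lemma~\ref{4.4} available, I would run the right‑Cauchy and left‑Cauchy dichotomy as in the proof of Theorem~\ref{Theorem3.5}: if $\{x_{n}\}$ is not right‑Cauchy, extract index sequences $\{m_{(j)}\}$ and $\{n_{(j)}\}$ with the liminf/limsup estimates of Lemma~\ref{4.4}, apply the contraction to $x=x_{m_{(j)}}$, $y=x_{n_{(j)}}$ and let $j\to\infty$; since the auxiliary entries $\eta(x_{m_{(j)}},x_{m_{(j)}+1})$ and $\eta(x_{n_{(j)}},x_{n_{(j)}+1})$ tend to $0$, we get $N(x_{m_{(j)}},x_{n_{(j)}})\to\varepsilon$, so continuity of $\theta$ produces the contradiction $\theta(s\varepsilon)\le[\theta(s\varepsilon)]^{k}<\theta(s\varepsilon)$; the left‑Cauchy case is symmetric. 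By completeness and Lemma~\ref{lemmkari} the sequence converges forward and backward to one and the same point $z$. To identify $\mathcal{T}z=z$, apply the contraction to $(x_{n},z)$ and to $(z,x_{n})$, combine with the sandwich bounds of Lemma~\ref{2.3} for $\eta(\mathcal{T}x_{n},\mathcal{T}z)$ and for $\eta(\mathcal{T}z,\mathcal{T}x_{n})$, and use $N(x_{n},z)\to\eta(z,\mathcal{T}z)$ to reach $s\,\eta(z,\mathcal{T}z)<\eta(z,\mathcal{T}z)$, impossible for $s>1$; uniqueness is the one‑line argument from the proof of Theorem~\ref{Theorem3.5}.

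The only genuinely new bookkeeping relative to Theorem~\ref{Theorem3.5} is the repeated reduction of $N$ to a plain $\eta$‑distance at each stage (namely $N(x_{n-1},x_{n})=\eta(x_{n-1},x_{n})$, $N(x_{m_{(j)}},x_{n_{(j)}})\to\varepsilon$, and $N(x_{n},z)\to\eta(z,\mathcal{T}z)$), and this, although routine, is where I expect the care to be needed, since it hinges on the consecutive distances being (eventually) monotone and on the $\mathcal{T}$‑terms inside $N$ vanishing along the extracted subsequences; with that in place the remainder is an exact transcription of the proof of Theorem~\ref{Theorem3.5}.
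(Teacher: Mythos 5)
The displayed hypothesis of this corollary is the plain bound $s^{2}\eta\left( \mathcal{T}x,\mathcal{T}y\right) \leq k\,\eta\left( x,y\right)$, i.e.\ your second reading, and your reduction — take $\theta(t)=e^{\sqrt{t}}\in \Theta$, so the hypothesis becomes condition $(\ref{3.1})$ with $r=\sqrt{k}\in \left] 0,1\right[$, then invoke Theorem \ref{Theorem3.5} — is correct and is exactly the (omitted) specialization the paper intends. The additional discussion of a max-type $N(x,y)$ is unnecessary for this statement.
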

 $$\eta\left( \mathcal{T}x,\mathcal{T}y\right) >0\Rightarrow \left[ s^{2}\eta\left( \mathcal{T}x,\mathcal{T}y\right)\right] \leq k\left[ \left( \eta\left( x,y\right)\right) \right] .$$
Then $\mathcal{T}$ has a unique FP.
\begin{example}
	Let $ \mathcal{X}=\left[1,2 \right]  $.\\ Define $ \eta:\mathcal{X}\times \mathcal{X}\rightarrow \left[0,+\infty \right[  $ by
	\begin{equation*}
	\eta(x,y)=\left\lbrace
	\begin{aligned}
	 (x-y) ^{2}	& \ if \ x\geq y\\
	\frac{1}{2} (y-x) ^{2}&  \ if \ x<y.
	\end{aligned}
	\right.
	\end{equation*}
Then $ (\mathcal{X},\eta) $ is a QRB-MS with coefficient s=2. Define mapping  $\mathcal{T}:\mathcal{X}\rightarrow \mathcal{X}$ by
	\begin{equation*}
	\mathcal{T}(x)=\sqrt{x}.
	\end{equation*}
Evidently, $\mathcal{T}(x)\in \mathcal{X} $. Let $\theta \left( t\right) =e^{\sqrt{t}},$ $r =\frac{1}{2}$. It obvious that $\theta \in \Theta $ and $r \in \left]0,1 \right[.$
Consider the following possibilities:\\
	\item[1] : $ x \geq y $. Then
	 $$ \mathcal{T}(x)=x^{\frac{1}{4}},\   \mathcal{T}(y)=y^{\frac{1}{4}},\ \eta\left( \mathcal{T}x,\mathcal{T}y \right)=(x^{\frac{1}{4}}-y^{\frac{1}{4}})^{2}.  $$
	 On the other hand
	 $$ \theta \left[s^{2}\eta\left(\mathcal{T}x,\mathcal{T}y\right) \right]=e^{2(x^{\frac{1}{4}}-y^{\frac{1}{4}})}     $$
	 and
\begin{align*}
	 \eta(x,y)&=(x-y)^{2}.
\end{align*}	
On the other hand
	$$\left[ \theta \left(\eta\left(x,y\right) \right) \right]^{r}= e^{\frac{1}{2}(x-y)} .$$
Since $$ \frac{1}{2}(x-y)=\frac{1}{2}(x^\frac{1}{4}-y^\frac{1}{4})(x^\frac{1}{4}+y^\frac{1}{4})(x^\frac{1}{4}+y^\frac{1}{4})(x^\frac{1}{4}+y^\frac{1}{4}                                            ) $$
Since $ x\geq 1 $, so
 $$2(x^{\frac{1}{4}}-y^{\frac{1}{4}})\leq  \frac{1}{2}(x^\frac{1}{4}-y^\frac{1}{4})(x^\frac{1}{4}+y^\frac{1}{4})(x^\frac{1}{4}+y^\frac{1}{4})(x^\frac{1}{4}+y^\frac{1}{4} ),     $$	
therefore
	\begin{align*}
	\theta(s^{2} \eta(\mathcal{T}x,\mathcal{T}y) &\leq  \left[ \theta( \eta(x,\mathcal{T}x))\right]^{r}.
	\end{align*}
	\item[2]: $ x<y $. Then
	 $$ \mathcal{T}(x)=x^{\frac{1}{4}},\ \mathcal{T}(y)=y^{\frac{1}{4}},\ \eta\left( \mathcal{T}x,\mathcal{T}y \right)=\frac{1}{2}(x^{\frac{1}{4}}-y^{\frac{1}{4}})^{2}.  $$
On the other hand
	 $$ \theta \left[s^{2}\eta\left(\mathcal{T}x,\mathcal{T}y\right) \right]=e^{\sqrt{2}(x^{\frac{1}{4}}-y^{\frac{1}{4}})},$$
and
\begin{align*}
	 \eta(x,y)&=\frac{1}{2}(x-y)^{2}.
\end{align*}
On the other hand
	$$\left[ \theta \left(\eta\left(x,y\right) \right) \right]^{r}= e^{\frac{1}{2\sqrt{2}}(x-y)} .$$
Since $$ \frac{1}{2}(x-y)=\frac{1}{2}(x^\frac{1}{4}-y^\frac{1}{4})(x^\frac{1}{4}+y^\frac{1}{4})(x^\frac{1}{4}+y^\frac{1}{4})(x^\frac{1}{4}+y^\frac{1}{4}                                            ) $$
Since $ x\geq 1 $, so
 $$2(x^{\frac{1}{4}}-y^{\frac{1}{4}})\leq  \frac{1}{2}(x^\frac{1}{4}-y^\frac{1}{4})(x^\frac{1}{4}+y^\frac{1}{4})(x^\frac{1}{4}+y^\frac{1}{4})(x^\frac{1}{4}+y^\frac{1}{4} ),     $$	
therefore
	\begin{align*}
	\theta(s^{2} \eta(\mathcal{T}x,\mathcal{T}y) &\leq  \left[ \theta( \eta(x,\mathcal{T}x))\right]^{r}.
	\end{align*}
Hence, the condition $ (\ref{3.1}) $ is satisfied. Therefore, $ \mathcal{T} $ has a unique FP $ z=\frac{1}{3} $.
\end{example}

\begin{theorem}\label{4}
Let $\left( \mathcal{X},\eta\right) $ be a complete b-MS and let $\mathcal{T}:\mathcal{X}\rightarrow \mathcal{X}$ be an $5 \theta-\phi $-contraction, i.e, there exist $\theta \in \Theta $ and $\phi\in \Phi $ such that for any $ x,y \in \mathcal{X} $, we have
\begin{equation}\label{4.A}
\eta\left( \mathcal{T}x,\mathcal{T}y\right) >0\Rightarrow \theta \left[s^{2}\eta\left( \mathcal{T}x,\mathcal{T}y\right)\right]  \leq  \phi\left[ \theta \left( \eta\left( x,y\right) \right) \right].
\end{equation}
Then $\mathcal{T}$ has a unique FP.
\end{theorem}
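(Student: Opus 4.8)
The plan is to repeat, almost verbatim, the scheme of the proof of Theorem \ref{Theorem3.5}, replacing the factor $[\theta(\cdot)]^{r}$ by $\phi(\theta(\cdot))$ everywhere and using the defining property of $\Phi$ (for $t>1$, $\phi^{n}(t)\to 1$) together with Lemma \ref{2.7} ($\phi(t)<t$ for $t>1$ and $\phi(1)=1$) wherever the proof of Theorem \ref{Theorem3.5} used $r^{n}\to 0$ and $[\theta(\cdot)]^{r}<\theta(\cdot)$.

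First I would fix $x_{0}\in\mathcal{X}$, put $x_{n+1}=\mathcal{T}x_{n}$, and dispose of the trivial case in which $\eta(x_{n_{0}},x_{n_{0}+1})=0$ for some $n_{0}$. Assuming $\eta(x_{n},x_{n+1})>0$ for all $n$, applying $(\ref{4.A})$ to $(x,y)=(x_{n-1},x_{n})$ gives
\begin{equation*}
\theta\bigl(\eta(x_{n},x_{n+1})\bigr)\le\theta\bigl(s^{2}\eta(x_{n},x_{n+1})\bigr)\le\phi\bigl(\theta(\eta(x_{n-1},x_{n}))\bigr),
\end{equation*}
and, since $\phi$ is nondecreasing, iterating yields $\theta(\eta(x_{n},x_{n+1}))\le\phi^{n}\bigl(\theta(\eta(x_{0},x_{1}))\bigr)$. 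As $\theta(\eta(x_{0},x_{1}))>1$, the right-hand side tends to $1$, hence $\eta(x_{n},x_{n+1})\to 0$ by the property defining $\Theta$. Running the same computation for the pairs $(x_{n-1},x_{n+1})$, $(x_{n},x_{n-1})$ and $(x_{n+1},x_{n-1})$ produces the four limits
\begin{equation*}
\eta(x_{n},x_{n+1}),\quad\eta(x_{n},x_{n+2}),\quad\eta(x_{n+1},x_{n}),\quad\eta(x_{n+2},x_{n})\ \longrightarrow\ 0,
\end{equation*}
which are exactly the hypotheses of Lemma \ref{4.4}; note also that the strict decrease of $\eta(x_{n},x_{n+1})$ forces the $x_{n}$ to be pairwise distinct, so all the $\eta$-values entering $(\ref{4.A})$ are positive.

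Next I would show that $\{x_{n}\}$ is both right- and left-Cauchy. If it were not right-Cauchy, Lemma \ref{4.4} would furnish $\varepsilon>0$ and subsequences $\{m(k)\},\{n(k)\}$ satisfying the eight displayed estimates; applying $(\ref{4.A})$ to $(x,y)=(x_{m(k)},x_{n(k)})$ gives $\theta\bigl(s^{2}\eta(x_{m(k)+1},x_{n(k)+1})\bigr)\le\phi\bigl(\theta(\eta(x_{m(k)},x_{n(k)}))\bigr)$. Passing to a further subsequence along which $\eta(x_{m(k)+1},x_{n(k)+1})\to a$ and $\eta(x_{m(k)},x_{n(k)})\to b$ (so $a\ge\varepsilon/s$ and $\varepsilon\le b\le s\varepsilon$ by Lemma \ref{4.4}), continuity of $\theta$ and $\phi$ together with Lemma \ref{2.7} give $\theta(s^{2}a)\le\phi(\theta(b))<\theta(b)$, whence $s^{2}a<b\le s\varepsilon$, i.e. $a<\varepsilon/s$, contradicting $a\ge\varepsilon/s$. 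The symmetric choice $(x,y)=(x_{n(k)},x_{m(k)})$ rules out the left-Cauchy failure. By completeness there exist $z,u\in\mathcal{X}$ with $\eta(x_{n},z)\to0$ and $\eta(u,x_{n})\to0$, and Lemma \ref{lemmkari} gives $z=u$.

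Finally, to see that $z$ is a fixed point I would assume, towards a contradiction, $\eta(\mathcal{T}z,z)>0$ and apply $(\ref{4.A})$ to $(x,y)=(x_{n},z)$: since $\eta(x_{n},z)\to0$, the right-hand side $\phi(\theta(\eta(x_{n},z)))$ tends to $\phi(1)=1$, which forces $\eta(\mathcal{T}x_{n},\mathcal{T}z)\to0$, while Lemma \ref{2.3} gives $\tfrac{1}{s}\eta(z,\mathcal{T}z)\le\liminf_{n}\eta(\mathcal{T}x_{n},\mathcal{T}z)=0$, a contradiction; the symmetric argument with $(x,y)=(z,x_{n})$ disposes of $\eta(z,\mathcal{T}z)>0$, so $\mathcal{T}z=z$. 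Uniqueness follows as in Theorem \ref{Theorem3.5}: if $\mathcal{T}z=z$, $\mathcal{T}u=u$ and $z\ne u$, then $\eta(z,u)>0$ and $(\ref{4.A})$ gives $\theta(\eta(z,u))\le\theta(s^{2}\eta(\mathcal{T}z,\mathcal{T}u))\le\phi(\theta(\eta(z,u)))<\theta(\eta(z,u))$, which is absurd. I expect the main obstacle to be the Cauchy step: because $\eta(x_{m(k)+1},x_{n(k)+1})$ and $\eta(x_{m(k)},x_{n(k)})$ need not converge, one must first pass to a subsequence (or argue carefully with $\liminf$ and $\limsup$) before invoking continuity of $\theta$ and $\phi$, and one must verify that the points involved are distinct so that $(\ref{4.A})$ genuinely applies; the remaining steps are a routine transcription of the proof of Theorem \ref{Theorem3.5}.
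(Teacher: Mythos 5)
Your proposal is correct and follows exactly the route the paper intends: the paper's proof of this theorem is a one-line remark that one repeats the argument of Theorem \ref{Theorem3.5}, replacing $\left[\theta(\cdot)\right]^{r}$ by $\phi\left(\theta(\cdot)\right)$ and using $\phi^{n}(t)\to 1$ together with Lemma \ref{2.7}, which is precisely what you do. If anything, your handling of the Cauchy step (passing to convergent subsequences before invoking continuity) and the direct limit argument at the fixed point are more careful than the paper's own sketch in Theorem \ref{Theorem3.5}.
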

\begin{proof}
The proof can easily sketched following the same scenario as in the proof of Theorem \ref{Theorem3.5}.
\end{proof}

\begin{corollary}
Let $\left( \mathcal{X},\eta\right) $ be a complete QRB-MS and $\mathcal{T}:\mathcal{X}\rightarrow \mathcal{X}$ $\ $be given mapping. Suppose that there exist $\theta\in \Theta$ and $k\in \left] 0,1\right[ $ such that for any $a_1,a_2\in \mathcal{X},$\\ we have
\end{corollary}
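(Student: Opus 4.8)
The plan is to obtain this corollary as an immediate consequence of Theorem \ref{4} by exhibiting explicit members of the classes $\Theta$ and $\Phi$ for which the assumed contractive inequality coincides with the $\theta$--$\phi$--contraction condition $(\ref{4.A})$. The natural choice is $\phi(t)=t^{k}$ on $\left[1,+\infty\right[$, with $k\in\left]0,1\right[$ the constant supplied by the hypothesis; if the inequality in the corollary is stated with a multiplicative constant rather than a power of $\theta$, one takes in addition $\theta(t)=e^{t}$, so that $\theta\!\left[s^{2}\eta(\mathcal{T}a_1,\mathcal{T}a_2)\right]\leq\phi\!\left(\theta(\eta(a_1,a_2))\right)$ unwinds precisely to the stated estimate.

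First I would check that $\phi(t)=t^{k}\in\Phi$. It sends $\left[1,+\infty\right[$ into itself because $t\geq1$ and $k>0$ give $t^{k}\geq1$; it is continuous and nondecreasing on $\left[1,+\infty\right[$ since its derivative $kt^{k-1}$ is positive for $t>0$; and for every fixed $t\in\left]1,+\infty\right[$ the iterates are $\phi^{n}(t)=t^{k^{n}}$, so that $k^{n}\to0$ together with the continuity of $u\mapsto t^{u}$ forces $\phi^{n}(t)\to t^{0}=1$. Thus all the requirements defining $\Phi$ are met. If the constant-multiple formulation is the one intended, I would likewise note that $\theta(t)=e^{t}$ is increasing and continuous, maps $\left]0,+\infty\right[$ into $\left]1,+\infty\right[$, and satisfies $x_{n}\to0$ if and only if $\theta(x_{n})\to1$, hence $\theta\in\Theta$.

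With these choices the hypothesis of the corollary becomes exactly inequality $(\ref{4.A})$ for the pair $(\theta,\phi)$ just constructed. Since $\left(\mathcal{X},\eta\right)$ is a complete QRB-MS, Theorem \ref{4} applies word for word and yields a unique fixed point of $\mathcal{T}$, which is the assertion of the corollary.

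There is essentially no obstacle here: the content of the proof is the reduction to Theorem \ref{4}. The one point deserving attention is the iterate identity $\phi^{n}(t)=t^{k^{n}}$ and the resulting limit $\phi^{n}(t)\to1$, and this is precisely where the restriction $0<k<1$ is used in an essential way, since for $k\geq1$ the map $t\mapsto t^{k}$ fails the limit condition and does not belong to $\Phi$.
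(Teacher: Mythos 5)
Your proposal is correct and is exactly the intended argument: the paper states this corollary without proof as an immediate specialization of Theorem \ref{4}, obtained by taking $\phi(t)=t^{k}$, whose membership in $\Phi$ (nondecreasing, continuous, $\phi^{n}(t)=t^{k^{n}}\to 1$ for $t>1$, using $0<k<1$) you verify correctly, so that the hypothesis coincides with condition $(\ref{4.A})$. Equivalently, the statement is just Theorem \ref{Theorem3.5} with $r=k$, so no further work is needed; your side remark about the constant-multiple variant (with $\theta(t)=e^{t}$) pertains to the other corollary and is harmless.
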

 $$\eta\left( \mathcal{T}a_1,\mathcal{T}a_2\right) >0\Rightarrow \theta\left[ s^{2}\eta\left( \mathcal{T}a_1,\mathcal{T}a_2\right)\right] \leq \theta\left[ \left( \eta\left( a_1,a_2\right)\right) \right]^{k} .$$
Then $\mathcal{T}$ has a unique FP.

\begin{example}
	Let $ \mathcal{X}=A\cup B $, where $ A=\lbrace \frac{1}{n}:n\in\lbrace 3,4,5,6\rbrace \rbrace $ and $ B=\left[\frac{1}{2},\frac{3}{2} \right]  $. Define $ \eta:\mathcal{X}\times \mathcal{X}\rightarrow \left[0,+\infty \right[  $ as follows:
	\begin{equation*}
	\left\lbrace
	\begin{aligned}
	\eta(a_1, a_2) &=\eta(a_2, a_1)\ for \ all \  a_1,a_2\in \mathcal{X};\\
	\eta(a_1, a_2) &=0\Leftrightarrow a_2= a_1.\\	
	\end{aligned}
	\right.
	\end{equation*}
	and
	\begin{equation*}
	\left\lbrace
	\begin{aligned}		
	\eta\left( \frac{1}{3},\frac{1}{4}\right) =\eta\left( \frac{1}{4},\frac{1}{5}\right) &=0,1\\
	\eta\left( \frac{1}{4},\frac{1}{3}\right) =\eta\left( \frac{1}{5},\frac{1}{4}\right) &=0,05\\
	\eta\left( \frac{1}{3},\frac{1}{5}\right) =\eta\left( \frac{1}{4},\frac{1}{6}\right) &=0,05\\
	\eta\left( \frac{1}{5},\frac{1}{3}\right) =\eta\left( \frac{1}{6},\frac{1}{4}\right) &=0,1\\
	\eta\left( \frac{1}{3},\frac{1}{6}\right) =\eta\left( \frac{1}{5},\frac{1}{6}\right)&=0,5\\
	\eta\left( a_1,a_2\right) =\left( \vert a_1-a_2\vert\right) ^{2} \ otherwise.
	\end{aligned}
	\right.
	\end{equation*}
	Then $ (\mathcal{X},\eta) $ is a b-rectangular MS with coefficient s=3. However we have the following:
	\item[1)] $ (\mathcal{X},\eta) $ is not a MS, as $\eta\left( \frac{1}{5},\frac{1}{6}\right)=0.5>0.15=\eta\left( \frac{1}{5},\frac{1}{4}\right)+\eta\left( \frac{1}{4},\frac{1}{6}\right) 	$.
	\item[2)] $ (\mathcal{X},\eta) $ is not a  b-MS for s=3, as $\eta\left( \frac{1}{5},\frac{1}{6}\right)=0.5>0.45=3\left[ \eta\left( \frac{1}{5},\frac{1}{4}\right)+\eta\left( \frac{1}{4},\frac{1}{6}\right)\right]  	$.
	\item[3)] $ (\mathcal{X},\eta) $ is not a rectangular MS, as $\eta\left( \frac{1}{5},\frac{1}{6}\right)=0.5>0.2=\eta\left( \frac{1}{5},\frac{1}{3}\right)+\eta\left( \frac{1}{3},\frac{1}{4}\right)+\eta\left( \frac{1}{4},\frac{1}{6}\right)$.
	\item[4)] $ (\mathcal{X},\eta) $ is not a b-rectangular MS, as $  \eta\left( \frac{1}{3},\frac{1}{4}\right)\neq \eta\left( \frac{1}{4},\frac{1}{3}\right)$.
	Define mapping $\mathcal{T}:\mathcal{X}\rightarrow \mathcal{X}$ by
	\begin{equation*}
	\mathcal{T}(a_1)=\left\lbrace
	\begin{aligned}
	\frac{\sqrt{a_1}+3}{4}	& \ if \ a_1\in \left[\frac{1}{2},\frac{3}{2} \right]\\
	1 &  \ if \ a_1\in A.\\
	\end{aligned}
	\right.
	\end{equation*}
	Then, $\mathcal{T}(a_1)\in \left[\frac{1}{2},\frac{3}{2} \right]$. Let $\theta \left( t\right) =\sqrt{t}+1,$ $\phi \left( t\right) =\frac{t+1}{2}$. It obvious that $\theta \in \Theta $ and $\phi \in\Phi .$\\
	Consider the following possibilities:\\
	case 1: $ a_1,a_2 \in \left[\frac{1}{2},\frac{3}{2} \right]$ with $ a_1\neq a_2 $, assume that $ a_1>a_2 $.
	$$\eta(\mathcal{T}a_1,\mathcal{T}a_2)=\left( \frac{\sqrt{a_1}-\sqrt{a_2}}{4}\right) ^ {2}  .$$
	Therefore,
	$$ \theta(s^{2} \eta(\mathcal{T}a_1,\mathcal{T}a_2)= \frac{3}{4}\left( \sqrt{a_1}-\sqrt{a_2}\right) +1$$
	and
	$$ \phi\left[ \theta( \eta(a_1,a_2))\right] =\frac{a_1-a_2}{2}+1.$$
	On the other hand
	\begin{align*}
	\theta(s^{2} \eta(\mathcal{T}a_1,\mathcal{T}a_2)- \phi\left[ \theta( \eta(a_1,a_2))\right]& =\frac{3\left( \sqrt{a_1}-\sqrt{a_2}\right)-2( a_1-a_2)}{4}\\
	&=\frac{1}{4}\left(\left( \sqrt{a_1}-\sqrt{a_2}\right) \right)\left[  3-2\left( \sqrt{a_1}+ \sqrt{a_2}\right) \right].
	\end{align*}
	Since $ a_1,a_2\in \left[\frac{1}{2},\frac{3}{2} \right]$, then
	$$\frac{1}{4}\left(\left( \sqrt{a_1}-\sqrt{a_2}\right) \right)\left[  3-2\left( \sqrt{a_1}+ \sqrt{a_2}\right) \right]\leq 0  .$$
Hence
	\begin{align*}
	\theta(s^{2} \eta(\mathcal{T}a_1,\mathcal{T}a_2) &\leq  \phi\left[ \theta( \eta(a_1,a_2))\right].
	\end{align*}
	case 2: $ a_1\in \left[\frac{1}{2},\frac{3}{2} \right], a_2 \in A  $ or $ a_2\in \left[\frac{1}{2},\frac{3}{2} \right],a_1 \in A  $ . \\
Therefore, $ \mathcal{T}(a_1)=\frac{\sqrt{a_1}+3}{4} $, $ \mathcal{T}(a_2)=1 $, then $ \eta(\mathcal{T}a_1,\mathcal{T}a_2)=\left( \vert \frac{\sqrt{a_1}-1}{4}   \vert \right) ^{2} $\\
In this case consider two possibilities:\\	
$ i) $ $a_1\geq 1:$
 Then $\sqrt{a_1}\geq 1.$ Therefore,
$$ \eta(\mathcal{T}a_1,\mathcal{T}a_2) = \left( \frac{\sqrt{a_1}-1}{4}\right)^{2} . $$
So, we have
	$$ \theta(s^{2} \eta(\mathcal{T}a_1,\mathcal{T}a_2)= \frac{3}{4}\left( \sqrt{a_1}-1\right) +1.$$
Hence
	\begin{align*}
	\theta(s^{2} \eta(\mathcal{T}a_1,\mathcal{T}a_2) &\leq  \phi\left[ \theta( \eta(a_1,a_2))\right].
	\end{align*}
$ ii) $ $ a_1<1: $
 Then $\sqrt{a_1}<1.$ Therefore,
$$ \eta(\mathcal{T}a_1,\mathcal{T}a_2)=\left( \vert \frac{1-\sqrt{a_1}}{4}   \vert \right)^{2}  = \left( \frac{1-\sqrt{a_1}}{4}\right) ^{2} . $$
	Since, $ a_1\in \left[\frac{1}{2},1 \right], $ then
	\begin{align*}
	\theta(s^{2} \eta(\mathcal{T}a_1,\mathcal{T}a_2) &\leq  \phi\left[ \theta( \eta(a_1,a_2))\right].
	\end{align*}
	Hence, the condition $ (\ref{4.A}) $ is satisfied. Therefore, $ \mathcal{T} $ has a unique FP $ z=1 $.
\end{example}
\medskip

\section*{Declarations}

\medskip

\noindent \textbf{Availability of data and materials}\newline
\noindent Not applicable.

\medskip

\noindent \textbf{Human and animal rights}\newline
\noindent We would like to mention that this article does not contain any studies with animals and does not
involve any studies over human being.

\medskip

\noindent \textbf{Competing  interest}\newline
\noindent On behalf of all authors, the corresponding author states that there is no conflict of interest.

\medskip

\noindent \textbf{Funding}\newline
\noindent  Authors declare that there is no funding available for this article.

\medskip

\noindent \textbf{Authors' contributions}\newline
\noindent The authors equally conceived of the study, participated in its
design and coordination, drafted the manuscript, participated in the
sequence alignment, and read and approved the final manuscript.

\medskip

\end{document}